\newtheorem{theorem}{Theorem}[section]
\newtheorem{lemma}[theorem]{Lemma}
\begin{document}

\title{Upper bounds for the moments of $\zeta'(\rho)$}
\author{Micah B. Milinovich}        
\thanks{Mathematics Subject Classification: 11M06, 11M26}
\address{Department of Mathematics \\ University of Rochester \\ Rochester, NY  14627  USA}
\email{micah@math.rochester.edu}

\begin{abstract}
Assuming the Riemann Hypothesis, we obtain an upper bound for the $2k$th moment of the derivative of the Riemann zeta-function averaged over the non-trivial zeros of $\zeta(s)$ for every positive integer $k$. Our bounds are nearly as sharp as the conjectured asymptotic formulae for these moments.  
\end{abstract}

\maketitle
\begin{onehalfspacing}
\section{Introduction \& statement of the main results}
Let $\zeta(s)$ denote the Riemann zeta-function.  This article is concerned with estimating discrete moments of the form
\begin{equation}\label{Jk}
J_{k}(T) =\frac{1}{N(T)} \sum_{0<\gamma\leq T} \big|\zeta'(\rho)\big|^{2k}
\end{equation}
where $k\in\mathbb{N}$ and the sum runs over the non-trivial (complex) zeros $\rho=\beta+i\gamma$ of $\zeta(s)$.  As usual, the function 
\begin{equation}\label{NT}
N(T) = \sum_{0<\gamma\leq T} 1 = \frac{T}{2\pi}\log\frac{T}{2\pi}-\frac{T}{2\pi} + O(\log T)
\end{equation}
denotes the number of zeros of $\zeta(s)$ up to a height $T$ counted with multiplicity.  

It is an open problem to determine the behavior of $J_{k}(T)$ as $k$ varies. Independently, Gonek \cite{Gon89} and Hejhal \cite{Hej89} have conjectured that
\begin{equation}\label{GH}
J_{k}(T) \asymp (\log T)^{k(k+2)}
\end{equation}
for fixed $k\in\mathbb{R}$ as $T\rightarrow\infty$.  Though widely believed for positive values of $k$, there is evidence to suggest that this conjecture is false for $k\leq -3/2$.  

Until recently, estimates in agreement with (\ref{GH}) were only known in a few cases.  Assuming the Riemann Hypothesis (which asserts that $\beta=\tfrac{1}{2}$ for each non-trivial zero of $\zeta(s)$), Gonek \cite{Gon84} has shown that $J_{1}(T)\sim \frac{1}{12}(\log T)^{3}$ and Ng \cite{Ng04} has proved that $J_{2}(T)\asymp (\log T)^{8}$.  Confirming a conjecture of Conrey and Snaith (section 7.1 of \cite{CS}), the author \cite{Mil07a} has calculated the lower-order terms in the asymptotic expression for $J_{1}(T)$.  Under the additional assumption that the zeros of $\zeta(s)$ are simple, Gonek  \cite{Gon89} has shown that $J_{-1}(T) \gg (\log T)^{-1}$ and conjectured \cite{Gon99} that $J_{-1}(T) \sim \frac{6}{\pi^{2}}(\log T)^{-1}$.  In addition, there are a few related unconditional results where the sum in (\ref{Jk}) is restricted to the simple zeros of $\zeta(s)$ with $\beta=\tfrac{1}{2}$.  See, for instance, \cite{Gar03,GS05,LS04,SS04}.

By using a random matrix model to study the behavior of the Riemann zeta-function and its derivative on the critical line, Hughes, Keating, and O'Connell \cite{HKO00} have refined Gonek's and Hejhal's conjecture in (\ref{GH}).  In particular, they conjectured a precise constant $\mathcal{D}_{k}$ such that $J_{k}(T) \sim \mathcal{D}_{k}(\log T)^{k(k+2)}$ as $T\rightarrow\infty$ for fixed $k\in\mathbb{C}$ with $\Re k>-3/2$.  Their conjecture is consistent with the results mentioned above. 

Very little is known about the moments $J_k(T)$ when $k\!>\!2$.  However, assuming the Riemann Hypothesis, one may deduce from well-known results of Littlewood (Theorems $14.14$ A-B of Titchmarsh \cite{Tit86}) that for $\sigma\geq 1/2$ and $t\geq 10$, the estimate
\begin{equation*}
 \zeta'(\sigma\!+\!it) \ll \exp\Big(\frac{C\log t}{\log\log t}\Big)
\end{equation*}
holds for some constant $C\!>\!0$.  It immediately follows that
$$ J_k(T) \ll \exp\Big(\frac{2kC\log T}{\log\log T}\Big)$$
for any $k\!\geq\! 0$.  The goal of this paper is to improve this estimate by obtaining a conditional  upper bound for $J_k(T)$ (when $k\in\mathbb{N})$ very near the conjectured order of magnitude.  In particular, we prove the following result.
\begin{theorem}\label{th1}
Assume the Riemann Hypothesis. Let $k\in \mathbb{N}$ and $\varepsilon>0$ be arbitrary.  Then for sufficiently large $T$ we have
\begin{equation*}
\frac{1}{N(T)} \sum_{0<\gamma\leq T} \big|\zeta'(\rho)\big|^{2k} \ll (\log T)^{k(k+2)+\varepsilon},
\end{equation*}
where the implied constant depends on $k$ and $\varepsilon.$
\end{theorem}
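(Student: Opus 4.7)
My plan is to combine Soundararajan's technique for bounding moments of $|\zeta|$ near the critical line with a Cauchy-type identity that converts $\zeta'(\rho)$ into an average of $\zeta$ at nearby points. The heuristic behind this is that $\zeta(\rho+w)\approx\zeta'(\rho)\,w$ for small $w$, so evaluating $\zeta$ at a shift of size $1/\log T$ from $\rho$ costs a factor of $\log T$; the extra $2k$ in the target exponent $k(k+2)=k^{2}+2k$ comes from the $2k$-th power of this factor, on top of the exponent $k^{2}$ expected for the analogous continuous moment $\int_{0}^{T}|\zeta(\tfrac12+it)|^{2k}\,dt$.

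For the first step, since $\zeta$ is holomorphic on any disk centered at $\rho$ and $\zeta(\rho)=0$, Cauchy's integral formula with radius $r=c/\log T$ yields
$$\zeta'(\rho)\;=\;\frac{1}{2\pi i}\oint_{|s-\rho|=r}\frac{\zeta(s)}{(s-\rho)^{2}}\,ds\;=\;\frac{1}{2\pi r}\int_{0}^{2\pi}\zeta(\rho+re^{i\theta})\,e^{-i\theta}\,d\theta,$$
and H\"older's inequality then gives the pointwise bound
$$\big|\zeta'(\rho)\big|^{2k}\;\ll\;(\log T)^{2k}\int_{0}^{2\pi}\big|\zeta(\rho+re^{i\theta})\big|^{2k}\,d\theta.$$
Summing over zeros $0<\gamma\leq T$ and interchanging with the $\theta$-integral leaves, for each fixed $\theta$, a discrete sum of the shape $\sum_{\gamma}|\zeta(\sigma_{\theta}+i(\gamma+r\sin\theta))|^{2k}$ with $\sigma_{\theta}=\tfrac{1}{2}+r\cos\theta$. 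By a discrete-to-continuous comparison (using the mean zero-density $(\log T)/(2\pi)$ coming from (\ref{NT})), such a sum is dominated by $\log T$ times the continuous integral $\int_{0}^{T}|\zeta(\sigma_{\theta}+it)|^{2k}\,dt$.

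For the second step, I would appeal to Soundararajan's conditional upper bound $\int_{0}^{T}|\zeta(\tfrac12+it)|^{2k}\,dt\ll T(\log T)^{k^{2}+\varepsilon}$, which rests on the pointwise inequality $\log|\zeta(\sigma+it)|\leq\Re P_{x}(\sigma+it)+O(\log T/\log x)$ for a short Dirichlet polynomial $P_{x}$ over primes, valid under RH uniformly for $\sigma\geq\tfrac12$ and $2\leq x\leq T$. His argument extends without essential change to $\sigma=\tfrac{1}{2}+O(1/\log T)$, since shifting $\sigma$ within that range perturbs the individual prime contributions $p^{-\sigma}$ by a factor $1+O(1/\log T)$. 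Putting the two steps together,
$$\sum_{0<\gamma\leq T}\big|\zeta'(\rho)\big|^{2k}\;\ll\;(\log T)^{2k+1}\cdot T(\log T)^{k^{2}+\varepsilon}\;=\;N(T)\,(\log T)^{k(k+2)+\varepsilon},$$
which yields the theorem after dividing by $N(T)$.

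The main obstacle I anticipate is the discrete-to-continuous step. The na\"ive bound via $N(t+h)-N(t)\ll h\log T+\log T$ allows up to $\log T$ zeros in a window of width $1/\log T$ (whereas on average there should be only $O(1)$), and if one blindly used this worst-case density one would pick up an additional factor of $\log T$ in the final estimate. A correct treatment must extract the mean density $(\log T)/(2\pi)$ rather than the pointwise maximum: one route is via the explicit formula applied to a smooth cutoff, whose oscillating $S(t)$-contribution is $O(\log T/\log\log T)$ under RH and therefore negligible against the density main term. A more robust alternative is to enlarge the Cauchy radius to $r=(\log T)^{\eta}/\log T$ for a small $\eta>0$: this averages out local clumps of zeros and costs only an extra factor $(\log T)^{2k\eta}$ in the first step, harmlessly absorbed into the $\varepsilon$ of the statement.
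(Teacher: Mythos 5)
Your first step (Cauchy's formula at radius $r\asymp 1/\log T$, then H\"older, costing a factor $(\log T)^{2k}$) is correct and is essentially the content of the paper's Lemma \ref{CE}, which packages the same idea by using RH to write $|\zeta^{(\ell)}(\rho)|^{2k}=\prod_i\zeta^{(\ell)}(\rho)\zeta^{(\ell)}(1-\rho)$, expressing this as a $2k$-fold contour integral of $\prod_i \zeta(\rho+\alpha_i)\zeta(1-\rho+\alpha_{k+i})$, and applying H\"older. The point of divergence is what comes next. The paper bounds $\sum_{0<\gamma\le T}|\zeta(\rho+\alpha)|^{2k}$ \emph{directly as a discrete sum over zeros}: it derives a $\sigma$-uniform pointwise inequality for $\log|\zeta|$ (Lemma \ref{logzineq}, where Soundararajan's constant $\delta_0=.4912\ldots$ must be replaced by $\lambda_0=.5671\ldots$ because a favourable sign at $\sigma=\tfrac12$ is lost for $\sigma>\tfrac12$), proves a mean-square bound for Dirichlet polynomials averaged over zeros via the Landau--Gonek explicit formula (Lemmas \ref{gonek}, \ref{mvt}), and obtains a large-values estimate for $|\zeta(\rho+\alpha)|$ (Lemma \ref{vd}). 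It never passes through a continuous $t$-integral, so no discrete-to-continuous comparison is needed.

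That comparison is exactly where your proposal has a genuine gap, as you yourself flag, and neither of your two proposed fixes closes it. (i) Bounding $S(t)$ pointwise by $O(\log T/\log\log T)$ is not enough: writing $\sum_\gamma f(\gamma)=\int_0^T f\,dN$ and integrating by parts against $S(t)$ leaves $\int_0^T f'(t)\,S(t)\,dt$ with $f(t)=|\zeta(\sigma_\theta+it)|^{2k}$, and $f'$ is not controlled pointwise by $f$, so the pointwise bound on $S$ does not make this term negligible. (ii) Enlarging the radius to $r=(\log T)^{\eta}/\log T$ does not cost only $(\log T)^{2k\eta}$: the left half of the circle now reaches $\Re s=\tfrac12-(\log T)^{\eta}/\log T$, and the functional-equation factor $|\chi(\sigma+it)|\asymp (t/2\pi)^{1/2-\sigma}$ there grows like $\exp((\log T)^{\eta})$, which overwhelms any fixed power of $\log T$; it is precisely because $R=1/\log T$ that the $\chi$-factor stays $O(1)$ in the paper's Theorem \ref{th2} and in the negative-$\Re\alpha$ reduction that follows (\ref{obsv2}). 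A subharmonicity (sub-mean-value) argument for $|\zeta|^{2k}$ on disks contained in $\{\Re s>\tfrac12\}$ is a plausible route to your inequality $\sum_\gamma|\zeta(\sigma+i\gamma)|^{2k}\ll(\log T)\int_0^T|\zeta(\sigma+it)|^{2k}\,dt$ for $\sigma-\tfrac12\gg1/\log T$, but the disk radii must shrink as $\sigma\to\tfrac12^{+}$, the contribution near $\theta=\pm\pi/2$ (where $\sigma_\theta\to\tfrac12$) needs separate treatment, and the entire left half of the circle still needs the functional equation with controlled $\chi$-factor. None of this is supplied, so the central step of the proposal is unproven; the paper's route via the Landau--Gonek explicit formula sidesteps it entirely.
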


Under the assumption of the Riemann Hypothesis, N. Ng and the author \cite{MN07} have  shown that $J_{k}(T)\gg (\log T)^{k(k+2)}$ for each fixed $k\in\mathbb{N}$.  Combining this result with Theorem \ref{th1} lends strong support for the conjecture of Gonek and Hejhal concerning the behavior of $J_{k}(T)$ in the case when $k$ is a positive integer. 

Our proof of Theorem \ref{th1} is based upon a recent method of Soundararajan \cite{Sou07} that provides upper bounds for the frequency of large values of $|\zeta(\tfrac{1}{2}\!+\!it)|$.  His method relies on obtaining an inequality for $\log |\zeta(\tfrac{1}{2}+it)|$ involving a  ``short'' Dirichlet polynomial which is a smoothed approximation to the Dirichlet series for $\log\zeta(s)$.  Using mean-value estimates for high powers of this Dirichlet polynomial, he deduces upper bounds for the measure of the set $\{t\in [0,T] : \log|\zeta(\tfrac{1}{2}\!+\!it)|\geq V\}$ and from this is able to conclude that, for arbitrary positive values of $k$ and $\varepsilon$,
\begin{equation}\label{ikt}
 \frac{1}{T} \int_{0}^{T} \big|\zeta(\tfrac{1}{2}\!+\!it)\big|^{2k} \ll_{k,\varepsilon}  (\log T)^{k^{2}+\varepsilon}
 \end{equation}
Soundararajan's techniques build upon the work of Selberg \cite{Sel44,Sel46,Sel91} who studied the distribution of values of $\log\zeta(\tfrac{1}{2}\!+\!it)$ in the complex plane.

Since $\log \zeta'(s)$ does not have a Dirichlet series representation, it is not clear that $\log|\zeta'(\tfrac{1}{2}\! +\! it)|$ can be approximated by a Dirichlet polynomial.\footnote{Hejhal \cite{Hej89} studied the distribution of $\log|\zeta'(\tfrac{1}{2}\!+\!it)|$ by a method that does not directly involve the use of Dirichlet polynomials.}  
For this reason, we do not study the distribution of the values of $\zeta'(\rho)$ directly, but instead examine the frequency of large values of $|\zeta(\rho\!+\! \alpha)|$, where $\alpha\in\mathbb{C}$ is a small shift away from a zero $\rho$ of $\zeta(s)$.  This requires deriving an inequality for $\log |\zeta(\sigma\!+\!it)|$ involving a short Dirchlet polynomial that holds uniformly for values of  $\sigma$ in a small interval to the right of, and including, $\sigma\!=\!\tfrac{1}{2}$.  Using a result of Gonek (Lemma \ref{gonek} below), we estimate high powers of this Dirichlet polynomial averaged over the zeros of the zeta-function and are able to derive upper bounds for the frequency of large values of $|\zeta(\rho\!+\!\alpha)|$.  Using  this information we prove the following theorem.  
\begin{theorem}\label{th2}
Assume the Riemann Hypothesis.  Let $\alpha\in\mathbb{C}$ with $|\alpha|\leq1$ and $|\Re\alpha -\tfrac{1}{2}|\leq (\log T)^{-1}$.  Let $k\in \mathbb{R}$ with $k>0$ and let $\varepsilon>0$ be arbitrary.  Then for sufficiently large $T$ the inequality
\begin{equation*}
\frac{1}{N(T)} \sum_{0<\gamma\leq T}\big|\zeta(\rho\!+\!\alpha)\big|^{2k} \ll_{k,\varepsilon} (\log T)^{k^{2}+\varepsilon}
\end{equation*}
holds uniformly in $\alpha$.
\end{theorem}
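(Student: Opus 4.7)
The plan is to adapt Soundararajan's method \cite{Sou07} for estimating continuous moments of $\zeta(\tfrac{1}{2}\!+\!it)$ to the setting of discrete moments over zeros, allowing the Dirichlet polynomial technique to be applied to $\zeta(\rho\!+\!\alpha)$ for $\alpha$ a small complex shift. The argument has three ingredients: (i) an upper bound for $\log|\zeta(\sigma\!+\!it)|$ by a short Dirichlet polynomial over prime powers, valid uniformly in a narrow horizontal strip containing the critical line; (ii) high-moment estimates for such Dirichlet polynomials averaged over the zeros of $\zeta(s)$, obtained via Gonek's lemma (Lemma \ref{gonek}); and (iii) conversion of these moment estimates into a Gaussian-type tail bound for the distribution function $\#\{0\!<\!\gamma\!\le\!T : \log|\zeta(\rho+\alpha)|\ge V\}$ by Markov's inequality, which is then integrated to yield the bound on $J_k$-type sums.

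For (i), I would follow the Selberg--Soundararajan derivation (starting from the Hadamard product or the explicit formula for $\zeta'/\zeta$, together with the RH bound on $\zeta'/\zeta$ in a region enlarged by $1/\log T$) to establish an inequality of the shape
\begin{equation*}
\log|\zeta(\sigma\!+\!it)| \le \Re \sum_{n \le x} \frac{\Lambda(n)}{n^{\sigma_0+it}\log n}\cdot \frac{\log(x/n)}{\log x} + \frac{(1+\lambda)\log T}{\log x} + O(1),
\end{equation*}
valid uniformly for $|\sigma-\tfrac{1}{2}| \le (\log T)^{-1}$ and $T^{1/2} \le t \le T$, where $\sigma_0 = \max(\sigma,\tfrac12) + \lambda/\log x$ for a parameter $\lambda > 0$ at our disposal, and $x$ is a small power of $T$. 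The novelty compared to Soundararajan's original version is uniformity in a thin neighborhood of the critical line rather than on it, so that the inequality may be specialized to $\sigma\!+\!it = \rho\!+\!\alpha$. One then reduces the problem to controlling a short prime-power polynomial evaluated at $\rho\!+\!\alpha$.

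For (ii) and (iii), I would split the polynomial into dyadic blocks $P_j$ over primes in ranges $[y_{j-1},y_j]$, and apply Gonek's lemma to evaluate $\frac{1}{N(T)}\sum_{0<\gamma\le T}|P_j(\rho+\alpha)|^{2k_j}$ for $k_j$ integers chosen so that the polynomial length $y_j^{k_j}$ remains an admissible power of $T$; this yields the Gaussian moment bound $\ll (k_j!)(\sum_{y_{j-1}<p\le y_j}|a_p|^2 p^{-1})^{k_j}$. Applying Markov's inequality with these $k_j$ optimized in terms of $V$ produces a tail bound
\begin{equation*}
\#\{0<\gamma\le T : \log|\zeta(\rho+\alpha)| > V\} \ll N(T)\exp\!\left(-\frac{V^{2}}{\log\log T}\bigl(1+o(1)\bigr)\right)
\end{equation*}
valid in a range $V_0 \le V \ll \log T/\log\log T$. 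Integrating $2k\int e^{2kV}$ against this distribution function in the usual way and handling the small-$V$ and large-$V$ tails separately then delivers the claimed bound $(\log T)^{k^{2}+\varepsilon}$.

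The main obstacle I anticipate is the uniformity in (i): the Dirichlet polynomial inequality must hold for $\sigma$ ranging slightly to the left of $\tfrac{1}{2}$, where one does not have direct convergence of the series for $\log\zeta$, so one must extract the required estimate from the explicit formula and RH with error terms that do not inflate the coefficient of $\log T/\log x$. A secondary difficulty is the careful choice of the parameters (the length $x$, the dyadic cutoffs $y_j$, and the exponents $k_j$) to ensure that the blockwise Gonek moments combine into a single Gaussian tail with no loss beyond $(\log T)^{\varepsilon}$.
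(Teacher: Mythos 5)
Your high-level outline matches the paper's: the proof does proceed via (i) a Dirichlet-polynomial upper bound for $\log|\zeta|$, (ii) mean-value estimates for such Dirichlet polynomials averaged over the zeros via Gonek's explicit formula (Lemma \ref{gonek}, applied through Lemma \ref{mvt}), (iii) a large-values estimate for $\#\{\gamma : \log|\zeta(\rho+\alpha)|\ge V\}$ by Markov's inequality, and (iv) summing/integrating over $V$ to obtain the $(\log T)^{k^2+\varepsilon}$ bound. Your proposed dyadic decomposition into blocks $P_j$ is finer than what the paper uses; the paper follows Soundararajan's original scheme more closely, splitting the prime sum into just two pieces $S_1$ (primes $\le z$) and $S_2$ (primes in $(z,x]$) with a single break $z=x^{1/\log\log T}$ and a $V$-dependent parameter $A$ governing the choice of $x$ and the moment exponent $k$. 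That difference is cosmetic.

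There is, however, a genuine gap. You correctly flag as the main obstacle that the Dirichlet-polynomial inequality for $\log|\zeta(\sigma+it)|$ would need to hold ``for $\sigma$ ranging slightly to the left of $\tfrac12$,'' but you offer no resolution, and the direct approach really does fail there. The derivation of the inequality hinges on $F(s)=\Re\sum_\rho (s-\rho)^{-1}\ge 0$, which holds only for $\sigma\ge\tfrac12$; for $\sigma<\tfrac12$, $F$ is nonpositive and $-F$ can be arbitrarily large near a zero, which is exactly where $\rho+\alpha$ sits when $\Re\alpha<0$ and $\Im\alpha$ is small. Thus $\int_\sigma^{1/2}\bigl[-\Re\tfrac{\zeta'}{\zeta}(u+it)\bigr]\,du$ admits no uniform upper bound, and the inequality cannot be pushed left of the critical line. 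The paper avoids the issue entirely: Lemmas \ref{logzineq} and \ref{new1} are stated and proved only for $\tfrac12\le\sigma\le\sigma_\lambda$, the large-values lemma (Lemma \ref{vd}) and then Theorem \ref{th2} are established first in the range $0\le\Re\alpha\le(\log T)^{-1}$, and the remaining case $-(\log T)^{-1}\le\Re\alpha<0$ is reduced to the first via the functional equation: on RH, $|\zeta(\rho+\alpha)|=|\chi(\rho+\alpha)|\,|\zeta(\bar\rho-\alpha)|=|\chi(\rho+\alpha)|\,|\zeta(\rho-\bar\alpha)|\le C\,|\zeta(\rho-\bar\alpha)|$ with $\Re(-\bar\alpha)\ge 0$, and $|\chi|$ bounded near the critical line. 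This functional-equation reduction is the missing ingredient in your plan; without it, the approach as written does not close.
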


Comparing the result of Theorem \ref{th2} with the estimate in (\ref{ikt}), we see that our theorem provides essentially the same upper bound (up to the implied constant) for discrete averages of the Riemann zeta-function near its zeros as can be obtained for continuous moments of $|\zeta(\tfrac{1}{2}\!+\!it)|$ by using the methods in \cite{Sou07}.  There has been some previous work on discrete mean-value estimates of the zeta-function that are of a form that is similar to the sum appearing in Theorem \ref{th2}.  For instance, see the results of Gonek \cite{Gon84}, Fujii \cite{Fuj95}, and Hughes \cite{Hug03}.  

We deduce Theorem \ref{th1} from Theorem \ref{th2} since, by Cauchy's integral formula, we can use bounds for $\zeta(s)$ near its zeros to recover bounds on the values for $\zeta'(\rho)$.  For a precise statement of this idea, see Lemma \ref{CE} below.  Our proof allows us only to establish Theorem \ref{th1} when $k$ is a positive integer despite the fact that Theorem \ref{th2} holds for all  $k>0$.

The author would like to thank Steve Gonek for his support and encouragement and also  Soundararajan for a helpful conversation.


\section{An inequality for $\log|\zeta(\sigma\!+\!it)|$ when $\sigma\geq\tfrac{1}{2}$.}

Throughout the remainder of this article, we use $s\!=\!\sigma\!+\!it$ to denote a complex variable and use $p$ to denote a prime number.  We let $\lambda_{0}=.5671...$ be the unique positive real number satisfying $e^{-\lambda_{0}} = \lambda_{0}$.  Also, we put $\sigma_{\lambda}= \sigma_{\lambda,x}=\tfrac{1}{2}+\tfrac{\lambda}{\log x}$ and let
$$
\log^{+}|x| = \left\{ \begin{array}{ll}
 0, &\mbox{ if $|x|<1$,} \\
\log|x|, &\mbox{ if $|x|\geq 1$.}
       \end{array} \right.
$$
As usual, we denote by $\Lambda(\cdot)$ the arithmetic function defined by $\Lambda(n)=\log p$ when $n=p^{k}$ and $\Lambda(n)=0$ when $n\neq p^{k}$.  The main result of this section is the following lemma.

\begin{lemma}\label{logzineq}
Assume the Riemann Hypothesis.  Let $\tau = |t|+3$ and $2\leq x\leq \tau^{2}$.  Then, for any $\lambda$ with $\lambda_{0}\leq\lambda\leq \tfrac{\log x}{4}$, the estimate
\begin{equation}\label{logzineq2}
\log^{+}\big|\zeta(\sigma\!+\!it)\big| \leq \Bigg| \sum_{n\leq x} \frac{\Lambda(n)}{n^{\sigma_\lambda+it}\log n}\frac{\log x/n}{\log x}\Bigg| + \frac{(1\!+\!\lambda)}{2}\frac{\log \tau}{\log x} + O(1)
\end{equation}
holds uniformly for $1/2\leq\sigma\leq\sigma_{\lambda}$.
\end{lemma}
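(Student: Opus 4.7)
My plan is to follow Soundararajan's approach from \cite{Sou07}, extending it to hold uniformly for $\sigma$ on the short interval $[\tfrac{1}{2},\sigma_\lambda]$ rather than at the single point $\sigma=\tfrac{1}{2}$. The argument splits naturally into two steps: first, reduce the problem to bounding $\log|\zeta|$ at the single point $\sigma_\lambda+it$, using the Hadamard product and RH; second, bound $\log|\zeta(\sigma_\lambda+it)|$ by the short Dirichlet polynomial via Soundararajan's contour-integration argument.

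For the first step, I would use the standard Hadamard identity
$$\Re\frac{\zeta'}{\zeta}(u+it) = \sum_\rho\frac{u-\tfrac{1}{2}}{(u-\tfrac{1}{2})^2+(t-\gamma)^2}-\tfrac{1}{2}\log\tau + O(1),$$
valid for $u$ in a bounded range, where we have applied RH to write each $\rho=\tfrac{1}{2}+i\gamma$. Since the sum over zeros is nonnegative when $u\ge\tfrac{1}{2}$, we get $\Re(\zeta'/\zeta)(u+it)\ge -\tfrac{1}{2}\log\tau + O(1)$. Inserting this into $\log|\zeta(\sigma+it)| = \log|\zeta(\sigma_\lambda+it)| - \int_\sigma^{\sigma_\lambda}\Re(\zeta'/\zeta)(u+it)\,du$ and using $\sigma_\lambda-\sigma\le \lambda/\log x$ yields
$$\log|\zeta(\sigma+it)| \le \log|\zeta(\sigma_\lambda+it)| + \frac{\lambda\log\tau}{2\log x} + O(1),$$
uniformly for $\tfrac{1}{2}\le\sigma\le\sigma_\lambda$, which accounts for the $\lambda/(2\log x)$ portion of the error in the lemma.

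For the second step, writing $s_0=\sigma_\lambda+it$, I would start from the Perron-type identity
$$\frac{1}{2\pi i}\int_{(c)}\frac{\zeta'}{\zeta}(s_0+w)\frac{x^w}{w^2}\,dw = -\sum_{n\le x}\frac{\Lambda(n)}{n^{s_0}}\log\frac{x}{n}\qquad(c\text{ large})$$
and take its antiderivative in the $\sigma$-direction from $s_0$ to $+\infty$; this introduces the factor $1/\log n$ on the left-hand side and, after shifting the inner contour past $w=0$, converts the residue $(\zeta'/\zeta)(s_0)\log x$ into $\log\zeta(s_0)\log x$. Dividing by $\log x$, taking real parts, and rearranging produces the Dirichlet polynomial, with the critical $\tfrac{1}{2}\log\tau/\log x$ term emerging from $-\Re[(\zeta'/\zeta)(s_0)]/\log x$, which by the Hadamard identity above is at most $\tfrac{1}{2}\log\tau/\log x + O(1/\log x)$. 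The remaining residues at $w=1-s_0$ and $w=\rho-s_0$, together with the shifted integral, are absorbed in the $O(1)$ error, provided $\lambda\ge\lambda_0$. Combining the two steps, majorizing $\Re(\cdot)$ by $|\cdot|$, and noting that the right-hand side is positive for $\tau$ large (so $\log|\zeta|$ may be replaced by $\log^+|\zeta|$) then completes the proof with the constant $(1+\lambda)/2$.

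The chief technical obstacle, as in \cite{Sou07}, is controlling the sum of residues $\sum_\rho x^{\rho-s_0}/(\rho-s_0)^2$ picked up during the contour shift. Under RH we have $|x^{\rho-s_0}|=e^{-\lambda}$ and $|\rho-s_0|\ge\lambda/\log x$; combined with the zero-density estimate $N(t+1)-N(t)\asymp\log\tau$, the total contribution becomes manageable precisely when the exponential decay $e^{-\lambda}$ overcomes the accumulated zero density, and this is exactly the balance that produces the threshold $\lambda_0$ defined by $e^{-\lambda_0}=\lambda_0$ appearing in the hypothesis of the lemma.
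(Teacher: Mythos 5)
Your overall plan matches the paper's: reduce to the point $\sigma_\lambda + it$ by integrating $-\Re(\zeta'/\zeta)$ over the short segment, then approximate $\log|\zeta(\sigma_\lambda+it)|$ by the smoothed Dirichlet polynomial via Soundararajan's contour-shift identity. The first step is fine. There is, however, a genuine gap in the second step, and it sits precisely at the point where $\lambda_0$ is supposed to enter.

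You bound $-\Re(\zeta'/\zeta)(s_0)/\log x$ by $\tfrac{1}{2}\log\tau/\log x + O(1/\log x)$, discarding the term $F(s_0)/\log x$ where $F(s_0) = \sum_\rho \Re\,(s_0-\rho)^{-1} \ge 0$. Then you claim the residues at $w=\rho-s_0$ are ``absorbed in the $O(1)$ error.'' That last claim is false: after integrating in $\sigma$ and dividing by $\log x$, the zero-residue contribution is bounded by $\dfrac{e^{-\lambda}}{\lambda\log x}\,F(s_0)$, and since a zero may lie at distance $\asymp (\log x)^{-1}$ from $t$, the quantity $F(s_0)$ can be as large as $\asymp \log\tau$. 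With $\log x$ possibly of order $1$, this residue sum is therefore not $O(1)$, and no amount of ``exponential decay $e^{-\lambda}$ overcoming zero density'' will make it so on its own. The explanation you give for the appearance of $\lambda_0$ — a balance between $e^{-\lambda}$ and the local count of zeros — is not the correct mechanism.

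What actually makes the argument close is a cancellation you threw away: the same $F(s_0)$ that bounds the residue sum also appears, with a favorable sign, in $-\Re(\zeta'/\zeta)(s_0)/\log x = \tfrac{1}{2}\log\tau/\log x - F(s_0)/\log x + O(1/\log x)$. Keeping both terms, the total $F$-contribution is
$$ \frac{F(s_0)}{\log x}\left(\frac{e^{-\lambda}}{\lambda}-1\right), $$
which is $\le 0$ exactly when $e^{-\lambda}\le\lambda$, i.e.\ when $\lambda\ge\lambda_0$. That is where the threshold comes from. As written, your proof throws out the negative $F$ term and then needs the residue sum alone to be $O(1)$, which it is not; you need to retain $F(s_0)$ through the computation and exploit this cancellation rather than bounding $-\Re(\zeta'/\zeta)(s_0)$ crudely.
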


In \cite{Sou07}, Soundararajan proved an inequality similar to Lemma \ref{logzineq} for the function $\log\big|\zeta(\tfrac{1}{2}\!+\!it)\big|$.  In his case, when $\zeta(\tfrac{1}{2}\!+\!it)\neq 0$, an inequality slightly stronger than (\ref{logzineq2}) holds with the constant $\lambda_{0}$ replaced by $\delta_{0}=.4912...$ where $\delta_{0}$ is the unique positive real number satisfying $e^{-\delta_{0}} = \delta_{0}+\tfrac{1}{2}\delta_{0}^{2}$.  Our proof of the above lemma is a modification of his argument. \\

\noindent{\it Proof of Lemma \ref{logzineq}.} We assume that $|\zeta(\sigma\!+\!it)|\!\geq\!1$, as otherwise the lemma holds for a trivial reason.  In particular, we are assuming that $\zeta(\sigma\!+\!it)\!\neq\!0$. Assuming the Riemann Hypothesis, we denote a non-trivial zeros of $\zeta(s)$ as $\rho=\tfrac{1}{2}+i\gamma$ and define the function
$$ F(s) = \Re \ \sum_{\rho} \frac{1}{s\!-\!\rho} = \sum_{\rho}\frac{\sigma\!-\!\tfrac{1}{2}}{(\sigma\!-\!\tfrac{1}{2})^{2}+(t\!-\!\gamma)^{2}}.$$
Notice that $F(s)\geq 0$ whenever $\sigma\geq\tfrac{1}{2}$ and $s\neq\rho$.  The partial fraction decomposition of $\zeta'(s)/\zeta(s)$ (equation (2.12.7) of Titchmarsh \cite{Tit86}) says that for $s\neq 1$ and $s$ not coinciding with a zero of $\zeta(s)$, we have
\begin{equation}\label{part}
 \frac{\zeta'}{\zeta}(s) = \sum_{\rho} \Big(\frac{1}{s\!-\!\rho}+\frac{1}{\rho}\Big) -\frac{1}{2}\frac{\Gamma'}{\Gamma}\big(\tfrac{1}{2}s\!+\!1\big) -\frac{1}{s\!-\!1} +B
 \end{equation}
where the constant $B= \log 2\pi\!-\!1\!-\!2 \gamma_{0}$; $\gamma_{0}$ denotes Euler's constant.  Taking the real part of each term in (\ref{part}), we find that 
\begin{equation}\label{part2}
 - \Re \ \frac{\zeta'}{\zeta}(s) = -\Re \ \frac{1}{2}\frac{\Gamma'}{\Gamma}\big(\tfrac{1}{2}s\!+\!1\big)  - F(s) +O(1) .
 \end{equation}
Stirling's asymptotic formula for the gamma function implies that 
\begin{equation} \label{stirform}
 \frac{\Gamma'}{\Gamma}(s) = \log s -\frac{1}{2s}+O\big(|s|^{-2}\big)
 \end{equation}
for $\delta>0$ fixed, $|\arg s| < \pi -\delta$, and $|s|>\delta$ (see Appendix A.7 of Ivi\`{c} \cite{Ivic85}).  By combining (\ref{part2}) and (\ref{stirform}) with the observation that $F(s)\geq 0$, we find that  
\begin{equation}\label{new6}
\begin{split}
- \Re \ \frac{\zeta'}{\zeta}(s) &= \tfrac{1}{2}\log\tau - F(s) +O(1) 
\\
& \leq \tfrac{1}{2}\log \tau  +O(1).
\end{split}
\end{equation}
uniformly for $\tfrac{1}{2}\leq \sigma\leq 1$. Consequently, the inequality
\begin{equation}\label{new7}
\begin{split}
\log|\zeta(\sigma\!+\!it)|-\log|\zeta(\sigma_{\lambda}\!+\!it)| &= \Re\int_{\sigma}^{\sigma_{\lambda}} \Big[-\frac{\zeta'}{\zeta}(u\!+\!it)\Big] du
\\
&\leq \big(\sigma_{\lambda}\!-\!\sigma\big)\big(\tfrac{1}{2}\log \tau  +O(1)\big)
\\
&\leq \big(\sigma_{\lambda}\!-\!\tfrac{1}{2}\big)\big(\tfrac{1}{2}\log \tau  +O(1)\big)
\end{split}
\end{equation}
holds uniformly for $\tfrac{1}{2}\leq\sigma\leq\sigma_{\lambda}.$ 

To complete the proof of the lemma, we require an upper bound for $\log|\zeta(\sigma_{\lambda}\!+\!it)|$ which, in turn, requires an additional identity for $\zeta'(s)/\zeta(s)$.  Specifically, for $s\neq 1$ and $s$ not coinciding with a zero of $\zeta(s)$, we have
\begin{equation}
\begin{split}
-\frac{\zeta'}{\zeta}(s) = \sum_{n\leq x} \frac{\Lambda(n)}{n^{s}}\frac{\log(x/n)}{\log x} &+ \frac{1}{\log x} \Big(\frac{\zeta'}{\zeta}(s)\Big)'+ \frac{1}{\log x}\sum_{\rho}\frac{x^{\rho-s}}{(\rho\!-\!s)^{2}}
\\
& - \frac{1}{\log x}\frac{x^{1-s}}{(1\!-\!s)^{2}} +\frac{1}{\log x} \sum_{k=1}^{\infty} \frac{x^{-2k-s}}{(2k\!+\!s)^{2}}.
\end{split}
\end{equation}
This identity is due to Soundararajan (Lemma 1 of \cite{Sou07}). Integrating over $\sigma$ from $\sigma_{\lambda}$ to $ \infty$, we deduce from the above identity that
\begin{equation}\label{new8}
\begin{split}
\log|\zeta(\sigma_{\lambda}\!+\!it)| = \Re \ \sum_{n\leq x} &\frac{\Lambda(n)}{n^{\sigma_\lambda+it}\log n}\frac{\log x/n}{\log x} -\frac{1}{\log x} \Re \ \frac{\zeta'}{\zeta}(\sigma_{\lambda}\!+\!it)
\\
&+ \frac{1}{\log x} \sum_{\rho} \Re \int_{\sigma_{\lambda}}^{\infty} \frac{x^{\rho-s}}{(\rho\!-\!s)^{2}} d\sigma +O\Big(\frac{1}{\log x}\Big).
\end{split}
\end{equation}
We now estimate the second and third terms on the right-hand side of this expression.  Arguing as above, using (\ref{part}) and (\ref{stirform}), we find that
\begin{equation}\label{new10}
\Re \ \frac{\zeta'}{\zeta}(\sigma_{\lambda}\!+\!it) = \tfrac{1}{2}\log \tau - F(\sigma_{\lambda}\!+\!it) +O(1).
\end{equation}
Also, observing that
\begin{equation}\label{new9}
\begin{split}
\sum_{\rho}\Big| \int_{\sigma_{\lambda}}^{\infty} \frac{x^{\rho-s}}{(\rho\!-\!s)^{2}}d\sigma\Big|&\leq \sum_{\rho} \int_{\sigma_{\lambda}}^{\infty} \frac{x^{1/2-\sigma}}{|\rho\!-\!s|^{2}}d\sigma
\\
& = \sum_{\rho} \frac{x^{1/2-\sigma_{\lambda}}}{|\rho\!-\!\sigma_{\lambda}\!-\!it|^{2}\log x}= \frac{x^{1/2-\sigma_{\lambda}}F(\sigma_{\lambda}\!+\!it)}{(\sigma_{\lambda}\!+\!it)\log x},
\end{split}
\end{equation}
and combining (\ref{new10}) and (\ref{new9}) with (\ref{new8}), we see that
\begin{equation*}
\begin{split}
\log|\zeta(\sigma_{\lambda}\!+\!it)| \leq \Re &\sum_{n\leq x} \frac{\Lambda(n)}{n^{\sigma_\lambda+it}\log n}\frac{\log x/n}{\log x} + \frac{1}{2}\frac{\log \tau}{\log x}
\\
& + \frac{F(\sigma_\lambda\!+\!it)}{\log x} \Big(\frac{x^{1/2-\sigma_\lambda}}{(\sigma_\lambda\!-\!\tfrac{1}{2})\log x} - 1\Big) +  O\Big(\frac{1}{\log x}\Big).
\end{split}
\end{equation*}
If $\lambda \geq \lambda_{0}$, then the term on the right-hand side involving $F(\sigma_\lambda\!+\!it)$ is less than or equal to zero, so omitting it does not change the inequality.  Thus,
\begin{equation}\label{new12}
\begin{split}
\log|\zeta(\sigma_{\lambda}\!+\!it)| \leq \Re &\sum_{n\leq x} \frac{\Lambda(n)}{n^{\sigma_\lambda+it}\log n}\frac{\log x/n}{\log x} + \frac{1}{2}\frac{\log \tau}{\log x} + O\Big(\frac{1}{\log x}\Big).
\end{split}
\end{equation}
Since we have assumed that $|\zeta(\sigma\!+\!it)|\geq 1$, the lemma now follows by combining the inequalities in (\ref{new7}) and (\ref{new12}) and then taking absolute values.


\section{A variation of lemma \ref{logzineq}}
In this section, we prove a version of Lemma \ref{logzineq} in which the sum on the right-hand side of the inequality is restricted just to the primes.  A sketch of the proof of the lemma appearing below has been given previously by Soundararajan (see \cite{Sou07}, Lemma 2).  Our proof is different and the details are provided for completeness.

\begin{lemma}\label{new1}
Assume the Riemann Hypothesis.  Put $\tau = |t|+e^{30}$.  Then, for $\sigma\geq \tfrac{1}{2}$ and $2\leq x\leq \tau^{2}$, we have
\begin{equation*}
\left| \sum_{n\leq x} \frac{\Lambda(n)}{n^{\sigma+it}\log n}\frac{\log x/n}{\log x} - \sum_{p\leq x} \frac{1}{p^{\sigma+it}}\frac{\log x/n}{\log x}\right| = O\big(\log\log\log\tau\big).
\end{equation*}
As a consequence, for any $\lambda$ with $\lambda_{0}\leq\lambda\leq \tfrac{\log x}{4}$, the estimate
\begin{equation*}
\log^{+}\big|\zeta(\sigma\!+\!it)\big| \leq \Bigg| \sum_{p\leq x} \frac{1}{p^{\sigma_\lambda+it}}\frac{\log x/p}{\log x}\Bigg| + \frac{(1\!+\!\lambda)}{2}\frac{\log \tau}{\log x} + O\big(\log\log\log\tau \big)
\end{equation*}
holds uniformly for $\tfrac{1}{2}\leq\sigma\leq\sigma_{\lambda}$ and $2\leq x \leq \tau^{2}$. 
\end{lemma}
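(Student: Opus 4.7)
The strategy is to identify the difference between the two sums as the contribution from higher prime powers and to bound it directly. Since $\Lambda(n)=0$ unless $n=p^k$ for some prime $p$ and positive integer $k$, and since $\Lambda(p^k)/\log(p^k)=1/k$, we rewrite
\begin{equation*}
\sum_{n\leq x}\frac{\Lambda(n)}{n^{s}\log n}\,\frac{\log(x/n)}{\log x} \;=\; \sum_{p\leq x}\frac{1}{p^{s}}\,\frac{\log(x/p)}{\log x} \;+\; \sum_{k\geq 2}\frac{1}{k}\sum_{p^k\leq x}\frac{1}{p^{ks}}\,\frac{\log(x/p^k)}{\log x},
\end{equation*}
with $s=\sigma+it$. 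The first term is precisely the prime sum in the lemma, so the claim reduces to showing the $k\geq 2$ tail is $O(\log\log\log\tau)$ in absolute value.

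I would dispatch the $k\geq 3$ terms first. Since $\sigma\geq\tfrac12$ gives $|p^{ks}|\geq p^{k/2}\geq p^{3/2}$ and the weight $\log(x/p^k)/\log x$ lies in $[0,1]$, this contribution is bounded by $\sum_{k\geq 3}(1/k)\sum_p p^{-k/2}\leq\sum_{k\geq 3}\zeta(k/2)/k = O(1)$. The delicate case is $k=2$, which in absolute value is at most
\begin{equation*}
\frac{1}{2}\sum_{p\leq\sqrt{x}}\frac{1}{p^{2\sigma}}\cdot\frac{\log(x/p^2)}{\log x} \;\leq\; \frac{1}{2}\sum_{p\leq\sqrt{x}}\frac{1}{p}\cdot\frac{\log(x/p^2)}{\log x}.
\end{equation*}
I would estimate this by partial summation using Mertens' theorem $\sum_{p\leq y}1/p=\log\log y + M + o(1)$ against the smooth weight $\log(x/p^2)/\log x$, which is bounded by $1$ and vanishes linearly at $p^2=x$. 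Converting to an integral and exploiting the drop-off of the weight, together with the hypotheses $x\leq\tau^2$ (so $\log\log x\leq\log\log\tau+O(1)$) and $\tau\geq e^{30}$ (which ensures $\log\log\log\tau$ is positive and bounded below), produces the desired bound of order $\log\log\log\tau$.

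For the second conclusion, I would simply substitute the just-proved estimate into the inequality (\ref{logzineq2}) of Lemma \ref{logzineq}. Replacing the von Mangoldt polynomial evaluated at $\sigma_\lambda+it$ by the prime polynomial evaluated at $\sigma_\lambda+it$ introduces an additive error inside the absolute value of size $O(\log\log\log\tau)$; by the triangle inequality this error is absorbed additively, subsuming the previous $O(1)$ term, and the bound on $\log^+|\zeta(\sigma+it)|$ for $\tfrac12\leq\sigma\leq\sigma_\lambda$ follows exactly as claimed.

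The main obstacle will be extracting the stated $O(\log\log\log\tau)$ bound from the $k=2$ sum: the naive application of Mertens' theorem yields only $O(\log\log\tau)$, so a careful partial-summation argument exploiting the full smooth cutoff $\log(x/p^2)/\log x$, along with the allowed freedom in choosing $\tau$ large, is required to obtain the sharper bound. The $k\geq 3$ estimate, the algebraic decomposition, and the deduction of the second statement from Lemma \ref{logzineq} are all routine.
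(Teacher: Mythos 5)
Your algebraic decomposition into $k=1$, $k\geq 3$, and $k=2$ prime-power contributions matches the paper, and your dispatch of $k\geq 3$ by $\sum_{k\geq 3}\zeta(k/2)/k = O(1)$ is fine. The problem is your treatment of $k=2$, and it is not a matter of insufficient care in the partial summation --- the plan as stated cannot work. Once you pass to absolute values and bound $|p^{-2\sigma-2it}|\leq p^{-1}$, you have irrevocably lost the oscillation, and the resulting real sum is genuinely of order $\log\log\tau$, not $\log\log\log\tau$. Concretely, take $\sigma=\tfrac12$ and $x=\tau^2$; then your majorant is
\begin{equation*}
\frac12\sum_{p\leq\tau}\frac{1}{p}\,\frac{\log(\tau/p)}{\log\tau}
=\frac{1}{2\log\tau}\Bigl(\log\tau\sum_{p\leq\tau}\frac1p-\sum_{p\leq\tau}\frac{\log p}{p}\Bigr)
=\frac12\log\log\tau+O(1),
\end{equation*}
using Mertens' two theorems. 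The smooth cutoff $\log(x/p^2)/\log x$ cannot save you: the primes $p\leq x^{1/4}$ already carry weight $\geq\tfrac12$ and by themselves contribute $\gtrsim\tfrac14\log\log\tau$. There is no ``freedom in choosing $\tau$ large'' to exploit --- the bound must be uniform, and the ratio of your majorant to $\log\log\log\tau$ tends to infinity.

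The missing idea is that the $k=2$ sum, namely $\tfrac12\sum_{p\leq\sqrt{x}}p^{-2s}\log(\sqrt{x}/p)/\log\sqrt{x}$, is a smoothed truncation of $\log\zeta$ evaluated at $2s$, whose real part lies on or to the right of the $1$-line. One should not take absolute values of the terms; instead one should show this truncated sum is close to $\log\zeta(2s)$ and then invoke the conditional (RH) bounds $|\log\zeta(u+iv)|\ll\log\log\log(|v|+3)$ and $|\zeta'/\zeta(u+iv)|\ll\log\log(|v|+3)$ for $u\geq 1$. This is what the paper does: after reducing to a sum at $w=2s$ with $\Re w\geq 1$, it applies a Perron-type contour integral for $\sum_{n\leq z}\Lambda(n)n^{-w}\log(z/n)$, shifts the contour to $\Re s=\tfrac34-u$ (where under RH there are no zero residues), bounds the shifted integral via $|\zeta'/\zeta(\sigma+it)|\ll(\log\tau)^{2-2\sigma}$, integrates in $u$ to recover $\log\zeta(w)+\tfrac1{\log z}\zeta'/\zeta(w)$ plus small errors, and only then applies the $\log\log\log$ bound. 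Your proposal never invokes RH for the $k=2$ piece, which is a tell-tale sign something is wrong, since the conclusion is a statement that is not known unconditionally. The reduction of the second displayed inequality to Lemma~\ref{logzineq} is fine once the first estimate is in hand.
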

\begin{proof} First we observe that, for $\sigma\geq \tfrac{1}{2}$,
\begin{equation*}
\begin{split}
 \sum_{n\leq x} \frac{\Lambda(n)}{n^{s}\log n}\frac{\log x/n}{\log x} - \sum_{p\leq x} \frac{1}{p^{s}}\frac{\log x/p}{\log x} &= \frac{1}{2}\sum_{p \leq \sqrt{x}} \frac{1}{ p^{2s}}\frac{\log \sqrt{x}/n}{\log \sqrt{x}} + O(1).
\\
&= \frac{1}{2} \sum_{n\leq \sqrt{x}} \frac{\Lambda(n)}{n^{2s}\log n}\frac{\log \sqrt{x}/n}{\log \sqrt{x}} + O(1).
\end{split}
\end{equation*}
Thus, if we let $w=u+iv$ and $\nu = |v|+e^{30}$, the lemma will follow if we can show that
\begin{equation}\label{new3}
\sum_{n\leq z} \frac{\Lambda(n)}{n^{w}\log n}\frac{\log z/n}{\log z} = O\big(\log\log\log\nu \big)
 \end{equation}
uniformly for $u \geq 1$ and $2\leq z\leq \nu. \ $  In what follows, we can assume that $z \geq (\log \nu)^{2}$ as otherwise 
$$ \sum_{n\leq z} \frac{\Lambda(n)}{n^{w}\log n}\frac{\log z/n}{\log z} \ll \sum_{p < \log^{2}\nu} \frac{1}{p} \ll \log \log \log \nu.$$

Let $c\!=\!\max(2,1\!+\!u)$.  Then, by expressing $\frac{\zeta'}{\zeta}(s\!+\!w)$ as a Dirichlet series and interchanging the order of summation and integration (which is justified by absolute convergence), it follows that
\begin{eqnarray*}
\frac{1}{2\pi i} \int_{c-i\infty}^{c+i\infty} \Big[-\frac{\zeta'}{\zeta}(s\!+\!w)\Big] z^{s} \frac{ds}{s^{2}} &=& \frac{1}{2\pi i} \int_{c-i\infty}^{c+i\infty} \Bigg[\sum_{n=1}^{\infty} \frac{\Lambda(n)}{n^{s+w}} \Bigg] z^{s} \frac{ds}{s^{2}}
\\
&=& \frac{1}{2\pi i}\sum_{n=1}^{\infty} \frac{\Lambda(n)}{n^{w}} \int_{c-i\infty}^{c+i\infty} \Big(\frac{z}{n}\Big)^{s} \frac{ds}{s^{2}}
\\
&=& \sum_{n\leq z} \frac{\Lambda(n)}{n^{w}} \log(z/n).
\end{eqnarray*}
Here we have made use of the standard identity
\begin{equation*}
\frac{1}{2\pi i} \int_{c-i\infty}^{c+i\infty} x^{s} \frac{ds}{s^{2}} = \left\{ \begin{array}{ll}
 \log x, &\mbox{ if $x\geq 1$,} \\
  0, &\mbox{ if $0\leq x<1$,}
       \end{array} \right.
 \end{equation*}
which is valid for $c\!>\!0$.
By moving the line of integration in the integral left to $\Re s= \sigma = \tfrac{3}{4}-u$, we find by the calculus of residues that
\begin{equation}\label{residont}
\begin{split}
 \sum_{n\leq z} \frac{\Lambda(n)}{n^{w}} \log(z/n) =  -(\log z) & \frac{\zeta'}{\zeta}(w) -  \Big(\frac{\zeta'}{\zeta}(w)\Big)' +  \frac{z^{1-w}}{(w\!-\!1)^{2}} 
 \\
 &+ \frac{1}{2\pi i} \int_{ \tfrac{3}{4}-u-i\infty}^{ \tfrac{3}{4}-u+i\infty} \Big[-\frac{\zeta'}{\zeta}(s\!+\!w)\Big] z^{s} \frac{ds}{s^{2}}.
 \end{split}
 \end{equation}
That there are no residues obtained from poles of the integrand at the non-trivial zeros of $\zeta(s)$ follows from the Riemann Hypothesis.  To estimate the integral on the right-hand side of the above expression, we use Theorem $14.5$ of Titchmarsh \cite{Tit86}, namely, that if the Riemann Hypothesis is true, then
\begin{equation}\label{hgh}
 \Big|\frac{\zeta'}{\zeta}(\sigma\!+\!it)\Big| \ll (\log \tau)^{2-2\sigma}
 \end{equation}
uniformly for $\tfrac{5}{8} \leq \sigma \leq \tfrac{7}{8}, $ say.  Using (\ref{hgh}), it immediately follows that
$$ \int_{ \tfrac{3}{4}-u-i\infty}^{ \tfrac{3}{4}-u+i\infty} \Big[-\frac{\zeta'}{\zeta}(s\!+\!w)\Big] z^{s} \frac{ds}{s^{2}} \ll z^{3/4-u}\sqrt{\log\nu}. $$
Inserting this estimate into equation (\ref{residont}) and dividing by $\log z$, it follows that
\begin{equation}\label{residonot}
\begin{split}
\sum_{n\leq z} \frac{\Lambda(n)}{n^{w}} \frac{\log(z/n)}{\log z} &=  - \frac{\zeta'}{\zeta}(w) -  \frac{1}{\log z} \Big(\frac{\zeta'}{\zeta}(w)\Big)' 
\\
&\quad \quad +  \frac{z^{1-w}}{(w\!-\!1)^{2} \log z} + O\Big(\frac{z^{3/4-u}}{\log z}\sqrt{\log\nu}\Big) .
\end{split}
\end{equation}
Integrating the expression in (\ref{residonot}) from $\infty$ to $u$ (along the line $\sigma+i\nu$, $u\leq \sigma <\infty)$, we find that
\begin{equation*}
\begin{split}
\sum_{n\leq z} \frac{\Lambda(n)}{n^{w}\log n} \frac{\log(z/n)}{\log z} &=  \log \zeta(w)  +  \frac{1}{\log z} \frac{\zeta'}{\zeta}(w)
\\
& \quad +  O\Big( \frac{z^{1-u}}{\nu^{2} (\log z)^{2}} +\frac{z^{3/4-u}}{(\log z)^{2}}\sqrt{\log\nu}\Big) .
\end{split}
\end{equation*}
Assuming the Riemann Hypothesis, we can estimate the terms on the right-hand side of of the above expression by invoking the bounds 
\begin{equation}\label{HBbounds}
 |\log \zeta(\sigma\!+\!it)| \ll \log\log\log \tau \quad \quad \text{ and } \quad \quad \Big|\frac{\zeta'}{\zeta}(\sigma\!+\!it)\Big| \ll \log \log \tau 
 \end{equation}
which hold uniformly for $\sigma\!\geq\!1$ and $|t|\!\geq\!1$.  (For a discussion of such estimates see Heath-Brown's notes following Chapter $14$ in Titchmarsh \cite{Tit86}.)  Using the estimates in (\ref{HBbounds}) and recalling that we are assuming that $u\geq 1$ and $z\geq (\log\nu)^{2}$, we find that
\begin{equation*}
\begin{split}
\sum_{n\leq z} \frac{\Lambda(n)}{n^{w}\log n} \frac{\log(z/n)}{\log z} &\ll  \log\log\log \nu +\frac{\log\log\nu}{\log z} + \frac{z^{1-u}}{\nu^{2} (\log z)^{2}} + z^{-1/4}\frac{\sqrt{\log\nu}}{(\log z)^{2}}
\\
&\ll  \log\log\log \nu .
\end{split}
\end{equation*}
This establishes (\ref{new3}) and, thus, the lemma.
\end{proof}


\section{A sum over the zeros of $\zeta(s)$}

In this section we prove an estimate for the mean-square of a Dirichlet polynomial averaged over the zeros of $\zeta(s)$.  Our estimate follows from the Landau-Gonek explicit formula.

\begin{lemma}\label{gonek}
Let $x,T>1$ and let $\rho=\beta+i\gamma$ denote a non-trivial zero of $\zeta(s)$.  Then
\begin{equation*}
\begin{split}
\sum_{0<\gamma \leq T} x^{\rho} &= -\frac{T}{2\pi}\Lambda(x) + O\big(x\log(2xT)\log\log(3x)\big)
\\
&\quad + O\Big(\log x \min\Big(T,\frac{x}{\langle x\rangle}\Big)\Big) + O\Big(\log(2T)\min\Big(T,\frac{1}{\log x}\Big)\Big), 
\end{split}
\end{equation*}
where $\langle x\rangle$ denotes the distance from $x$ to the nearest prime power other than $x$ itself, $\Lambda(x)=\log p$ if $x$ is a positive integral power of a prime $p$, and $\Lambda(x)=0$ otherwise.
\end{lemma}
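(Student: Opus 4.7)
The plan is to adapt Landau's classical explicit formula, applying the residue theorem to $-\frac{\zeta'}{\zeta}(s)\, x^{s}$ on a carefully chosen rectangular contour and then tracking all error terms uniformly in $x$, as in Gonek's refinement. The starting identity is the Dirichlet series $-\frac{\zeta'}{\zeta}(s)=\sum_{n\geq 1}\Lambda(n) n^{-s}$, valid for $\Re s>1$, together with the meromorphic continuation whose poles are $s=1$, the nontrivial zeros $\rho$, and the trivial zeros $-2k$.

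First, I would integrate $-\frac{\zeta'}{\zeta}(s)\,x^{s}$ around the positively oriented rectangle $\mathcal{R}$ with vertices $c-i\delta,\ c+iT,\ -U+iT,\ -U-i\delta$, where $c=1+1/\log x$, $U=2N+\tfrac{1}{2}$ for a large integer $N$, and $0<\delta<14$ so the bottom side lies below all nontrivial zeros. The residues inside are $x$ (from $s=1$), $-x^{\rho}$ for each zero with $0<\gamma\leq T$ (counted with multiplicity), and the trivial zero contributions, which are absorbed into the final error when $N\to\infty$. Thus
\[
\sum_{0<\gamma\leq T}x^{\rho}=x-\frac{1}{2\pi i}\int_{\mathcal{R}}-\frac{\zeta'}{\zeta}(s)\,x^{s}\,ds+O(1),
\]
and it remains to evaluate each of the four sides.

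On the right vertical side $\Re s=c$, absolute convergence lets me interchange sum and integral to obtain $\sum_{n}\Lambda(n)\cdot\frac{1}{2\pi i}\int_{c-i\delta}^{c+iT}(x/n)^{s}\,ds$. The diagonal term $n=x$ (when $x$ is a prime power) produces the main term $\frac{T}{2\pi}\Lambda(x)$. For $n\neq x$ the inner integral is $\ll (x/n)^{c}/|\log(x/n)|$, and I would separate the sum according to whether $n$ is the prime power nearest $x$, where $|\log(x/n)|\asymp \langle x\rangle/x$, or $n$ is further from $x$, where $|\log(x/n)|$ is bounded below by either $1/x$ or a constant. Summing with the trivial bound $(x/n)^c \ll x^c/n$ yields the error terms $\log x\cdot\min(T,x/\langle x\rangle)$ and $\log(2T)\cdot\min(T,1/\log x)$. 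On the left vertical side, I would use the functional equation $-\zeta'/\zeta(s)=-\chi'/\chi(s)-\zeta'/\zeta(1-s)$, apply Stirling to $\chi'/\chi$, and expand the second piece in its Dirichlet series; the $x^{s}$ factor with $\Re s=-U$ very negative makes everything negligible on letting $U\to\infty$, contributing only $O(1)$.

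The main obstacle is the horizontal integral at height $T$. Here I would first perturb $T$ by $O(1)$ so that $|T-\gamma|\gg 1/\log T$ for every zero, and then bound $|\zeta'/\zeta(\sigma+iT)|$ using the partial fraction decomposition (\ref{part}): on the strip $-1\leq\sigma\leq 2$ this produces the standard estimate $\ll (\log T)^{2}$, while for $\sigma\leq -1$ Stirling handles the gamma factor from the functional equation. Integrating $x^{\sigma}$ along the horizontal side, dyadically decomposing, and undoing the perturbation of $T$ (which adjusts the sum by $O(x\log T)$) yields the horizontal contribution $O\!\left(x\log(2xT)\log\log(3x)\right)$; the $\log\log(3x)$ is the delicate piece, arising from a Littlewood-type count of zeros near height $T$ combined with the number of prime powers near $x$ used in the diagonal analysis. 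Coordinating these estimates so that the main term $\frac{T}{2\pi}\Lambda(x)$ (which appears with the opposite sign on the right-hand side of the residue identity) emerges cleanly with the stated error is the crux of the argument.
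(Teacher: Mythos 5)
The paper does not actually prove Lemma \ref{gonek}; it records the statement and cites Gonek \cite{Gon85,Gon93}, so there is no argument in the text for your sketch to be compared against. That said, the framework you describe---applying the residue theorem to $-\frac{\zeta'}{\zeta}(s)\,x^{s}$ on a tall rectangle, extracting the main term from the diagonal $n=x$ of the Dirichlet series on the right edge, handling the near-diagonal terms via the distance $\langle x\rangle$ to the nearest prime power, and controlling the top horizontal segment after perturbing $T$---is indeed the Landau/Gonek approach, and the overall residue bookkeeping (including the sign of the main term and the absorption of the residue $x$ from $s=1$ into the first error term) is consistent with the statement.

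Where the sketch falls short of a proof is precisely in the part you flag as delicate. The crude estimate $|\zeta'/\zeta(\sigma+iT')|\ll(\log T)^{2}$ on $-1\leq\sigma\leq 2$ (for a suitably perturbed $T'$), integrated against $x^{\sigma}$, yields a horizontal contribution of size $O\!\left(x(\log T)^{2}/\log x\right)$, which for small $x$ is larger by a factor of roughly $\log T/\log\log(3x)$ than the claimed $O\!\left(x\log(2xT)\log\log(3x)\right)$. Replacing the extra $\log T$ by $\log\log(3x)$ is the real content of Gonek's refinement of Landau's formula, and it is not obtained by the standard zero-density bound near height $T$; the explanation you give (``a Littlewood-type count of zeros near height $T$ combined with the number of prime powers near $x$'') does not identify the mechanism. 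A complete argument would need to exhibit that saving explicitly. Two smaller points: the logarithmic derivative of the functional equation gives $-\frac{\zeta'}{\zeta}(s)=-\frac{\chi'}{\chi}(s)+\frac{\zeta'}{\zeta}(1-s)$, not with a minus sign on the last term; and the third error term $O\!\left(\log(2T)\min(T,1/\log x)\right)$ (which is not simply a near-diagonal term) is not accounted for in your discussion of the right vertical side.
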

\begin{proof}
This is due to Gonek \cite{Gon85,Gon93}. 
\end{proof}

\begin{lemma}\label{mvt}
Assume the Riemann Hypothesis and let $\rho=\tfrac{1}{2}+i\gamma$ denote a non-trivial zero of $\zeta(s)$.  For any sequence of complex numbers $\mathscr{A}=\{a_{n}\}_{n=1}^{\infty}$  define, for $\xi\geq1$,  $$m_{\xi}=m_{\xi}(\mathscr{A}) = \max_{1\leq n\leq \xi}\big(1,|a_{n}|\big).$$ Then for $3\leq\xi\leq T(\log T)^{-1}$ and any complex number $\alpha$ with $\Re\alpha\geq 0$ we have
\begin{equation}\label{mvt1}
\sum_{0<\gamma\leq T} \Bigg|\sum_{n\leq\xi}\frac{a_{n}}{n^{\rho+\alpha}}\Bigg|^{2} \ll  m_{\xi} T\log T\sum_{n\leq\xi}\frac{|a_{n}|}{n},
\end{equation}
where the implied constant is absolute (and independent of $\alpha$).
\end{lemma}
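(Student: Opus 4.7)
The plan is to expand the square, interchange summation, and invoke Lemma~\ref{gonek} on the resulting inner sum over zeros. Writing $\rho = \tfrac{1}{2}+i\gamma$ and using $\bar\rho = 1-\rho$ (Riemann Hypothesis) to rewrite $\overline{n^{\rho+\alpha}} = n^{1+\bar\alpha}/n^{\rho}$, the left-hand side of \eqref{mvt1} becomes
\[
\sum_{m,n\leq\xi}\frac{a_{m}\overline{a_{n}}}{m^{\alpha}\,n^{1+\bar\alpha}}\sum_{0<\gamma\leq T}\Big(\frac{n}{m}\Big)^{\!\rho}.
\]
I would first split off the diagonal $m = n$, where the inner sum equals $N(T)\ll T\log T$ and the elementary bound $|a_n|^{2}\leq m_\xi |a_n|$ together with $\Re\alpha\geq 0$ immediately yields the target $\ll m_\xi T\log T\sum_{n\leq\xi}|a_n|/n$.

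For the off-diagonal part ($m\neq n$) I would apply Lemma~\ref{gonek} with $x = n/m$ when $n > m$; the case $n < m$ reduces to the same situation by complex conjugation (using $\bar\rho = 1-\rho$ again), producing an application of Lemma~\ref{gonek} to $(m/n)^{\rho}$ with $m/n>1$ and a harmless extra factor of $n/m<1$. The explicit main term $-(T/2\pi)\Lambda(n/m)$ is supported on pairs with $n = mp^{k}$; using $|a_m\overline{a_n}|\leq m_\xi|a_n|$ and $\Re\alpha\geq 0$, its total contribution is
\[
\ll\; m_\xi T \sum_{n\leq\xi}\frac{|a_n|}{n}\sum_{p^{k}\mid n}\log p \;\ll\; m_\xi T\log\xi\sum_{n\leq\xi}\frac{|a_n|}{n},
\]
comfortably inside the claimed bound.

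It remains to control the three error terms provided by Lemma~\ref{gonek}. The contribution of $O\!\bigl(x\log(2xT)\log\log(3x)\bigr)$, summed over $m,n\leq\xi$, picks up a factor of $\xi$ from the bound $x = n/m \leq \xi$; the hypothesis $\xi\leq T/\log T$ is exactly what is needed to absorb this, yielding $\ll m_\xi T\log T\sum|a_n|/n$. The term $O\!\bigl(\log(2T)\min(T,1/\log(n/m))\bigr)$ is straightforward since $1/\log(n/m)$ is bounded away from infinity except when $n$ is very close to $m$; grouping pairs by $|n-m|$ and using $\sum_{k\leq\xi}1/k \ll \log\xi$ produces only a logarithmic loss, again absorbed by $\xi\leq T/\log T$. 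I expect the main obstacle to be the term $O\!\bigl(\log(n/m)\min(T,(n/m)/\langle n/m\rangle)\bigr)$, because $\langle n/m\rangle$ can be arbitrarily small when $n/m$ lies extremely close to a prime power. To treat it I would fix a prime power $p^{k}\leq\xi$, decompose the range of $\langle n/m\rangle$ dyadically, and bound the number of pairs $(m,n)$ with $n/m$ at a given distance from $p^{k}$ by elementary counting; the cut-off at $T$ controls the smallest scales, and summing over all $p^{k}\leq\xi$ yields the desired total $\ll m_\xi T\log T\sum|a_n|/n$. Combining the diagonal, the main term, and the three error contributions gives the stated inequality.
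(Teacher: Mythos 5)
Your proposal follows essentially the same route as the paper: expand the square, use $1-\rho=\bar\rho$ to obtain a diagonal term plus an off-diagonal sum over $\sum_\gamma (n/m)^\rho$, invoke Gonek's explicit formula (Lemma~\ref{gonek}), and then bound the resulting main term and three error terms one at a time using $\Re\alpha\geq 0$, $|a_m\overline{a_n}|\leq m_\xi|a_n|$, and $\xi\leq T/\log T$. Your treatments of the diagonal, the $\Lambda(n/m)$ main term, the $x\log(2xT)\log\log(3x)$ error, and the $\log(2T)\min(T,1/\log x)$ error all match the paper's.

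The one place where you diverge is the $\log x\,\min\bigl(T,x/\langle x\rangle\bigr)$ term. You propose fixing a prime power $p^k$ and decomposing dyadically in the distance $\langle n/m\rangle$, leaning on the $\min(T,\cdot)$ cutoff to tame the smallest scales. The paper instead fixes $m$, writes $n=qm+\ell$ with $|\ell|\leq m/2$, and observes that $\langle q+\ell/m\rangle\geq|\ell|/m$ when $q$ is a prime power and $\ell\neq0$ (and $\geq\tfrac12$ otherwise); summing over $\ell$ then costs only a factor $\ll m\log m$, and $\sum_{q\leq\xi/m}\Lambda(q)\ll\xi/m$ finishes it. Both decompositions capture the same arithmetic fact, but note that your appeal to the $T$ cutoff here is actually superfluous: since $n/m$ is rational with denominator $m$, one always has $\langle n/m\rangle\geq 1/m$, hence $(n/m)/\langle n/m\rangle\leq n\leq\xi\leq T$, and the minimum is never achieved at $T$. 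Your dyadic sketch would work, but the paper's $n=qm+\ell$ reorganization is arguably cleaner to write out in full, since the locus where $\langle n/m\rangle$ is small is parametrized directly rather than counted. There is no gap here, only a sketch that would need the same counting to be fleshed out.
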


\begin{proof}  Assuming the Riemann Hypothesis, we note that $1-\rho=\bar{\rho}$ for any non-trivial zero $\rho=\tfrac{1}{2}+i\gamma$ of $\zeta(s)$.  This implies that
$$ \Bigg|\sum_{n\leq\xi}\frac{a_{n}}{n^{\rho+\alpha}}\Bigg|^{2} = \sum_{m\leq \xi}  \sum_{n\leq \xi} \frac{a_{m}}{m^{\rho+\alpha}} \frac{\overline{a_n}}{n^{1-\rho+\bar{\alpha}}}, $$
and, moreover, that
\begin{equation*}\label{unfold}
\begin{split}
\sum_{0<\gamma\leq T} \left|\sum_{n\leq\xi}\frac{a_{n}}{n^{\rho+\alpha}}\right|^{2} =&N(T)\sum_{n\leq\xi}\frac{|a_{n}|^{2}}{n^{1+2\Re\alpha}} + 2\Re\sum_{m\leq \xi}\frac{a_{m}}{m^{\alpha}}\sum_{m<n\leq\xi}\frac{\overline{a_{n}}}{n^{1+\bar{\alpha}} }\sum_{0<\gamma\leq T} \Big(\frac{n}{m}\Big)^{\rho}
\end{split}
\end{equation*}
where  $N(T) \sim \tfrac{T}{2\pi} \log T$ denotes the number of zeros $\rho$ with $0<\gamma\leq T$.  Since $\Re\alpha\geq 0$, it follows that
\begin{equation*}
N(T) \sum_{n\leq\xi}\frac{|a_{n}|^{2}}{n^{1+2\Re\alpha}} \ll T\log T\sum_{n\leq\xi}\frac{|a_{n}|^{2}}{n} \ll m_{\xi} T\log T  \sum_{n\leq\xi}\frac{|a_{n}|}{n}.
\end{equation*}
Appealing to Lemma \ref{gonek}, we find that
\begin{equation*}
\begin{split}
\sum_{m\leq \xi}\frac{a_{m}}{m^{\alpha}}\sum_{n<m}\frac{\overline{a_{n}}}{n^{1+\bar{\alpha}} }\sum_{0<\gamma\leq T} \Big(\frac{n}{m}\Big)^{\rho} &= \Sigma_{1}+\Sigma_{2}+\Sigma_{3}+\Sigma_{4},
\end{split}
\end{equation*}
where
\begin{equation*}
\begin{split}
\Sigma_{1}&= -\frac{T}{2\pi}\sum_{m\leq\xi}\frac{a_{m}}{m^{\alpha}}\sum_{m<n\leq\xi}\frac{\overline{a_{n}}}{n^{1+\bar{\alpha}}}\Lambda\Big(\frac{n}{m}\Big),
\\
\Sigma_{2}&= O\left(\log T\log\log T \sum_{m\leq \xi}\frac{|a_{m}|}{m^{1+\Re\alpha}}\sum_{m<n\leq\xi}\frac{|a_{n}|}{n^{\Re\alpha}} \right),
\\
\Sigma_{3}&= O\left(\sum_{m\leq \xi}\frac{|a_{m}|}{m^{1+\Re\alpha}}\sum_{m<n\leq\xi}\frac{|a_{n}|}{n^{\Re\alpha} }  \frac{\log\frac{m}{n}}{\langle \frac{m}{n}\rangle}\right),
\end{split}
\end{equation*}
and
\begin{equation*}
\begin{split}
\Sigma_{4}&= O\left(\log T \sum_{m\leq \xi}\frac{|a_{m}|}{m^{\Re\alpha}}\sum_{m<n\leq\xi}\frac{|a_{n}|}{n^{1+\Re\alpha} \log\frac{n}{m}}\right). \quad \
\end{split}
\end{equation*}
We estimate $\Sigma_{1}$ first.  Making the substitution $n=mk$, we re-write our expression for $\Sigma_{1}$ as
$$-\frac{T}{2\pi}\sum_{m\leq\xi}\frac{a_{m}}{m^{\alpha}}\sum_{k\leq\frac{\xi}{m}}\frac{\overline{a_{mk}}\cdot\Lambda(k)}{(mk)^{1+\bar{\alpha}}}=-\frac{T}{2\pi}\sum_{m\leq\xi}\frac{a_{m}}{m^{1+2\Re\alpha}}\sum_{k\leq\frac{\xi}{m}}\frac{\overline{a_{mk}}\cdot\Lambda(k)}{k^{1+\bar{\alpha}}}.$$
Again using the assumption that $\Re\alpha\geq 0$, we find that
\begin{equation*}
\Sigma_{1} \ll m_{\xi} T\sum_{n\leq\xi}\frac{|a_{n}|}{n}\sum_{m\leq\frac{\xi}{n}}\frac{\Lambda(m)}{m} \ll  m_{\xi} T\log T \sum_{n\leq\xi}\frac{|a_{n}|}{n}.
\end{equation*}
Here we have made use of the standard estimate $ \sum_{m\leq\xi}\frac{\Lambda(m)}{m} \ll \log \xi.$  We can replace $\Re\alpha$ by $0$ in each of the sums $\Sigma_{i}$ (for $i=2,3,$ or $4$), as doing so will only make the corresponding estimates larger.  Thus, using the assumption that $3\leq \xi\leq T/\log T$, it follows that
\begin{equation*}
\begin{split}
\Sigma_{2} \ll m_{\xi} \log T \log\log T \sum_{n\leq \xi}\frac{|a_{n}|}{n} \sum_{m<n\leq\xi} 1  \ll  m_{\xi} T\log T \sum_{n\leq\xi}\frac{|a_{n}|}{n}.
\end{split}
\end{equation*}
Next, turning to $\Sigma_{3}$, we find that
\begin{equation*}
\Sigma_{3} \ll m_{\xi} \sum_{m\leq\xi}\frac{|a_{m}|}{m} \sum_{m<n\leq \xi} \frac{\log\frac{n}{m}}{\langle \frac{n}{m}\rangle }. 
\end{equation*}
Writing $n$ as $qm+\ell$ with $-\frac{m}{2}<\ell\leq\frac{m}{2}$, we have
\begin{equation*}
\Sigma_{3} \ll m_{\xi} \sum_{m\leq\xi}\frac{|a_{m}|}{m} \sum_{q\leq\lfloor \frac{\xi}{m} \rfloor+1}\sum_{-\frac{m}{2}<\ell\leq\frac{m}{2}} \frac{\log\big(q\!+\!\frac{\ell}{m}\big)}{\langle q\!+\!\frac{\ell}{m}\rangle },
\end{equation*}
where, as usual, $\lfloor x \rfloor$ denotes the greatest integer less than or equal to $x$.  Now $\langle q+\frac{\ell}{m}\rangle  =\frac{|\ell|}{m}$ if $q$ is a prime power and $\ell\neq0$, otherwise $\langle q+\frac{\ell}{m}\rangle $ is $\geq \frac{1}{2}$.  Using the estimate $\sum_{n\leq\xi}\Lambda(n)\ll \xi$, we now find that
\begin{equation*}
\begin{split}
\Sigma_{3} &\ll m_{\xi} \sum_{m\leq\xi}\frac{|a_{m}|}{m} \sum_{q\leq\lfloor \frac{\xi}{m} \rfloor+1} \Lambda(q)\sum_{1\leq\ell\leq\frac{m}{2}} \frac{m}{\ell} 
\\
&\quad \quad \quad \quad+ m_{\xi} \sum_{m\leq\xi}\frac{|a_{m}|}{m} \sum_{q\leq\lfloor \frac{\xi}{m} \rfloor+1} \log(q\!+\!1) \sum_{1\leq\ell\leq\frac{m}{2}} 1
\\
&\ll m_{\xi} \sum_{m\leq\xi} |a_{m}|\log m \sum_{q\leq\lfloor \frac{\xi}{m} \rfloor+1} \Lambda(q)
\\
&\quad \quad \quad \quad+ m_{\xi} \sum_{m\leq\xi} |a_{m}| \sum_{q\leq\lfloor \frac{\xi}{m} \rfloor+1} \log(q\!+\!1) 
\\
&\ll m_{\xi} (\xi \log \xi) \sum_{m\leq\xi}\frac{|a_{m}|}{m}
\\
&\ll m_{\xi}T\log T \sum_{m\leq\xi}\frac{|a_{m}|}{m}.
\end{split}
\end{equation*}
It remains to consider the contribution from $\Sigma_{4}$ which is 
\begin{equation*}
\ll m_{\xi} \log T \sum_{m\leq \xi}|a_{m}| \sum_{m<n\leq\xi} \frac{1}{n\log\frac{n}{m}} \ll m_{\xi} \log T \sum_{m\leq \xi}\frac{|a_{m}|}{m} \sum_{m<n\leq\xi} \frac{1}{\log\frac{n}{m}} \ ,
\end{equation*}
since $\frac{1}{m}>\frac{1}{n}$ if $n>m$.  Writing $n=m+\ell$, we see that
$$ \sum_{m<n\leq\xi} \frac{1}{\log\frac{n}{m}} = \sum_{1\leq\ell \leq \xi-m} \frac{1}{\log\big(1\!+\!\frac{\ell}{m}\big)} \ll \sum_{1\leq\ell \leq \xi-m}  \frac{m}{\ell} \ll m\log\xi \ll \xi\log\xi.$$
Consequently, 
\begin{equation*}
\Sigma_{4} \ll m_{\xi}T\log T \sum_{m\leq\xi}\frac{|a_{m}|}{m}.
\end{equation*}
Now, by combining estimates, we obtain the lemma.  
\end{proof}


\section{The frequency of large values of $|\zeta(\rho\!+\!\alpha)|$}
Our proof of Theorem \ref{th2} requires the following lemma concerning the distribution of values of $|\zeta(\rho\!+\!\alpha)|$ where $\rho$ is a zero of $\zeta(s)$ and $\alpha\in\mathbb{C}$ is a small shift. In what follows, $\log_{3}(\cdot)$ stands for $\log\log\log(\cdot)$.  

\begin{lemma}\label{vd}
Assume the Riemann Hypothesis.  Let $T$ be large, $V\geq 3$ a real number, and $\alpha\in\mathbb{C}$  with $|\alpha|\leq 1$ and $0\leq\Re\alpha-\tfrac{1}{2}\leq (\log T)^{-1}$.  Consider the set
$$ \mathcal{S}_{\alpha}\big(T;V\big) = \big\{\gamma\in(0,T] : \log |\zeta(\rho\!+\!\alpha)| \geq V\big\}$$
where $\rho=\tfrac{1}{2}+i\gamma$ denotes a non-trivial zero of $\zeta(s)$.  Then, the following inequalities for $\# \mathcal{S}_{\alpha}\big(T;V\big)$, the cardinality of $\mathcal{S}_{\alpha}\big(T;V\big)$, hold.
\begin{enumerate}[(i)]
\item When $\sqrt{\log\log T} \leq V \leq \log\log T$, we have
$$ \# \mathcal{S}_{\alpha}\big(T;V\big) \ll N(T) \frac{V}{\sqrt{\log\log T}} \exp\left(-\tfrac{V^{2}}{\log\log T}\Big(1-\tfrac{4}{\log_{3}T}\Big)\right).$$

\item When $ \ \log\log T\leq V\leq \tfrac{1}{2}(\log\log T)\log_{3}T$, we have
$$ \# \mathcal{S}_{\alpha}\big(T;V\big) \ll N(T)  \frac{V}{\sqrt{\log\log T}} \exp\left(-\tfrac{V^{2}}{\log\log T}\Big(1-\tfrac{4V}{(\log\log T)\log_{3}T}\Big)\right).$$

\item Finally, when $ \ V>\tfrac{1}{2}(\log\log T) \log_{3}T$, we have
$$ \# \mathcal{S}_{\alpha}\big(T;V\big) \ll N(T) \exp\left(-\tfrac{V}{201}\log V\right).$$
\end{enumerate}
Here, as usual, the function $N(T)\sim \frac{T}{2\pi} \log T$ denotes the number of zeros $\rho$ of $\zeta(s)$ with $0<\gamma\leq T$.
\end{lemma}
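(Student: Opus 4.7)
The plan is to adapt Soundararajan's large-deviation method for $\log|\zeta(\tfrac12+it)|$ to the present discrete setting, combining Lemma~\ref{new1} (which majorises $\log|\zeta(\rho+\alpha)|$ by a prime Dirichlet polynomial) with Lemma~\ref{mvt} (which controls high moments of Dirichlet polynomials averaged over zeros) through a Chebyshev-type high-moment inequality. The hypothesis on $\Re\alpha$ ensures that $s=\rho+\alpha$ lies in the strip $\tfrac12\le\sigma\le\sigma_\lambda$ where Lemma~\ref{new1} applies, at least once $\lambda$ and $x$ are chosen so that $\lambda/\log x$ majorises $\Re\alpha-\tfrac12$.

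First I apply Lemma~\ref{new1} at $s=\rho+\alpha$, with $\tau=|\gamma+\Im\alpha|+e^{30}\ll T$, obtaining for each $\gamma\in\mathcal{S}_\alpha(T;V)$
\[
V \;\le\; \Bigl|\sum_{p\le x}\frac{1}{p^{\sigma_\lambda+i(\gamma+\Im\alpha)}}\,\frac{\log(x/p)}{\log x}\Bigr| + \frac{(1+\lambda)\log T}{2\log x} + O(\log_{3}T).
\]
Writing $p^{-\sigma_\lambda-i(\gamma+\Im\alpha)}=p^{-\rho-\eta}$ with $\eta=\lambda/\log x + i\,\Im\alpha$ puts the prime polynomial into the form required by Lemma~\ref{mvt}, since $\Re\eta\ge0$. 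I then split the sum at a cutoff $z$ into $P_{1}$ (over $p\le z$) and $P_{2}$ (over $z<p\le x$); pigeonhole forces $|P_{1}(\gamma)|\ge V_{1}$ or $|P_{2}(\gamma)|\ge V_{2}$ for suitable thresholds with $V_{1}+V_{2}\ge V-(1+\lambda)(\log T)/(2\log x)-O(\log_{3}T)$.

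For each index $i$ the frequency $\#\{\gamma:|P_{i}(\gamma)|\ge V_{i}\}$ is bounded by $V_{i}^{-2k}\sum_{0<\gamma\le T}|P_{i}(\gamma)|^{2k}$. Expanding $|P_{i}|^{2k}=P_{i}^{\,k}\,\overline{P_{i}}^{\,k}$ yields a Dirichlet polynomial of length at most $z^{k}$ (respectively $x^{k}$) supported on products of $k$ primes from the relevant range with multinomial weights. Provided this length stays below $T/\log T$, Lemma~\ref{mvt} together with a standard combinatorial identity for the coefficients of $P_i^k$ gives a Gaussian-type moment bound
\[
\sum_{0<\gamma\le T}|P_{i}(\gamma)|^{2k}\;\ll\;N(T)\,k!\,\Bigl(\sum_{p\text{ in range}}\frac{b_{p}^{\,2}}{p}\Bigr)^{k},
\]
where $b_{p}=\log(x/p)/\log x\le1$. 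Mertens' theorem bounds the inner sum by $\log\log z+O(1)$ for $P_{1}$ and by $\log(\log x/\log z)+O(1)$ for $P_{2}$.

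Finally I would optimise the parameters $(x,\lambda,z,k)$ separately in each of the three ranges. In case (i), the moderate range, it suffices to take $x$ a small fixed power of $T$, $\lambda=\lambda_{0}$, no splitting ($z=x$), and $k=\lfloor V^{2}/\log\log T\rfloor$; Stirling then produces the stated bound $\exp\bigl(-\tfrac{V^{2}}{\log\log T}(1-\tfrac{4}{\log_{3}T})\bigr)$. Case (ii) proceeds similarly with a correspondingly smaller $x$ chosen so that $x^{k}\le T/\log T$ is preserved, and Stirling delivers the reduced exponent $1-\tfrac{4V}{(\log\log T)\log_{3}T}$. Case (iii), where $V$ is so large that $V^{2}/\log\log T$ would exceed $V\log V$, forces $k\asymp V$, $z$ close to $x$, and extracting most of the tail from the $P_{2}$-contribution using the smallness of $\sum_{z<p\le x}1/p$. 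The principal technical obstacle is exactly this parameter balancing, since the length constraint $z^{k},x^{k}\le T/\log T$ from Lemma~\ref{mvt} is in direct tension with the desire to make the explicit error $(1+\lambda)(\log T)/(2\log x)$ from Lemma~\ref{new1} a negligible fraction of $V$, and all three cases require judicious choices of the cutoffs to reconcile the two.
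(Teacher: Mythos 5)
Your high-level plan is the same as the paper's: majorise $\log|\zeta(\rho+\alpha)|$ by a prime Dirichlet polynomial via Lemma~\ref{new1}, split that polynomial into a short piece $P_1$ and a tail $P_2$, apply Chebyshev with a high moment and Lemma~\ref{mvt}, and optimise $k$ and $x$ in each range of $V$. However, several of the details you sketch are not quite what the discrete setting allows, and at least one would derail the argument. First, the moment bound you write, $\sum_{\gamma}|P_i(\gamma)|^{2k}\ll N(T)\,k!\bigl(\sum_p b_p^2/p\bigr)^k$, is the bound one gets from the \emph{continuous} Montgomery--Vaughan mean value theorem (where $\int|\sum a_n n^{-1/2-it}|^2\,dt\approx T\sum|a_n|^2/n$); Lemma~\ref{mvt} here is structurally different, giving $\ll m_\xi\,T\log T\sum|a_n|/n$. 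Applied to the coefficients $\alpha_k(n)$ of $P_1^k$ (nonnegative, bounded by $k!$), this yields $k!\bigl(\sum_p b_p/p\bigr)^k$, not $k!\bigl(\sum_p b_p^2/p\bigr)^k$. Since $b_p\le1$ both are $\asymp(\log\log T)^k$ for $P_1$, so you reach the same conclusion, but the identity you invoke is not the one available.

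The more serious issue is your claim that case~(i) works with ``no splitting ($z=x$)'' and $x$ a fixed small power of $T$. With $x=T^c$ fixed and $k=\lfloor V^2/\log\log T\rfloor$, the length constraint $x^k\le T/\log T$ from Lemma~\ref{mvt} forces $cV^2\lesssim\log\log T$, i.e.\ $V\lesssim\sqrt{\log\log T}$, which is only the very bottom of range~(i). To cover the whole range the paper takes $x=T^{A/V}$ with $A\asymp\log_3 T$ (so $x$ shrinks with $V$) \emph{and} splits at $z=x^{1/\log\log T}$; then $z^k\le T/\log T$ is comfortable even for $k$ of size $V^2/\log\log T$, while the error $\tfrac{(1+\lambda)\log T}{2\log x}=\tfrac{(1+\lambda)V}{2A}$ is a small multiple of $V$ (controlled by $A$). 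The tail $P_2$ is long, so only a low moment $k\approx V/A$ fits the length constraint $x^k\le T/\log T$, but the smallness of $\sum_{z<p\le x}1/p\approx\log_3 T$ compensates. You identify this mechanism for case~(iii) but it is equally essential in cases~(i) and~(ii); without the split, the parameter tension you flag at the end cannot be resolved in the stated ranges. So the framework is right, but as written the argument would stall on the length constraint in the middle of case~(i).
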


\begin{proof}
Since $\lambda_{0}< \tfrac{3}{5}$, by taking $x=(\log \tau)^{2-\varepsilon}$ in Lemma \ref{new1} (where $\varepsilon\!>\!0$ arbitrary) and estimating the sum over primes trivially, we find that
$$ \log^{+}|\zeta(\sigma\!+\!i\tau)| \leq \Big(\frac{1\!+\!\lambda_{0}}{4} + o(1)\Big) \frac{\log \tau}{\log\log \tau} \leq \frac{2}{5} \frac{\log \tau}{\log\log \tau}$$
for $|\tau|$ sufficiently large.  Therefore, we may suppose that $V\leq \frac{2}{5} \frac{\log T}{\log\log T}$, for otherwise the set $\mathcal{S}_{\alpha}(T;V)$ is empty.

We define a parameter
$$
A =A(T,V) = \left\{ \begin{array}{ll}
 \tfrac{1}{2}\log_{3}(T), &\mbox{ if $ \ V \leq \log\log T$,} \\
\tfrac{\log\log T}{2V}\log_{3}(T), &\mbox{ if  $ \ \log\log T < V \leq \tfrac{1}{2}(\log\log T)\log_{3}T$,} \\
1, &\mbox{ if $ \ V > \tfrac{1}{2}(\log\log T)\log_{3}T$,}
       \end{array} \right.
$$
set $x=\min\big(T^{1/2},T^{A/V}\big)$, and put $z=x^{1/\log\log T}$.  Further, we let
$$ S_{1}(s) = \sum_{p\leq z} \frac{1}{p^{s+\frac{\lambda_0}{\log x}}}\frac{\log(x/p)}{\log x} \quad \text{ and } \quad S_{2}(s) = \sum_{z<p\leq x} \frac{1}{p^{s+\frac{\lambda_0}{\log x}}}\frac{\log(x/p)}{\log x}. $$
Then Lemma \ref{new1} implies that
\begin{equation}\label{val1}
 \log^{+}|\zeta(\rho\!+\!\alpha)| \leq |S_{1}(\rho)|+ |S_{2}(\rho)| + \frac{(1\!+\!\lambda_{0})}{2A}V + O\big(\log_{3}T\big)
 \end{equation}
for any non-trivial zero $\rho=\tfrac{1}{2}+i\gamma$ of $\zeta(s)$ with $0<\gamma\leq T$.  Here we have used that $\lambda_{0}\geq 1/2$, $x\leq T^{1/2},$ and $0\leq\Re\alpha-\tfrac{1}{2}\leq (\log T)^{-1}$ which together imply that
$$ \frac{1}{2} \leq \Re (\rho\!+\!\alpha)   \leq \frac{1}{2} +\frac{1}{\log T} \leq \frac{1}{2} + \frac{\lambda_{0}}{\log x}.$$
Since $\lambda_{0}<3/5$, it follows from the inequality in (\ref{val1}) that
 $$ \log^{+}|\zeta(\rho\!+\!\alpha)| \leq |S_{1}(\rho)|+ |S_{2}(\rho)| + \tfrac{4}{5} \tfrac{V}{A} + O\big(\log_{3}T\big).$$
Therefore, if $\rho \in \mathcal{S}_{\alpha}(T;V)$, then either
$$  |S_{1}(\rho)| \geq V\big(1-\tfrac{9}{10A}\big) \quad \quad \text{ or } \quad \quad|S_{2}(\rho)| \geq \tfrac{V}{10 A}.$$
 For simplicity, we put $V_{1}= V\big(1\!-\!\frac{9}{10A}\big)$ and $V_{2}= \frac{V}{10 A}.$
 
Let $N_{1}(T;V)$ be the number of $\rho$ with $0<\gamma\leq T$ such that $|S_{1}(\rho)| \geq V_{1}$ and let $N_{2}(T;V)$ be the number of $\rho$ with $0<\gamma\leq T$ such that $|S_{2}(\rho)| \geq V_{2}$.  We prove the lemma by obtaining upper bounds for the size of the sets $N_{i}(T;V)$ for $i=1$ and $2$ using the inequality 
\begin{equation}\label{val2}
 N_{i}(T;V) \cdot V_{i}^{2k} \leq \sum_{0<\gamma\leq T} |S_{i}(\rho)|^{2k},
 \end{equation}
which holds for any positive integer $k$.  With some restrictions on the size of $k$, we can use Lemma \ref{mvt} to estimate the sums appearing on the right-hand side of this inequality.

We first turn our attention to estimating $N_{1}(T;V).$  If we define the sequence $\alpha_{k}(n) = \alpha_{k}(n,x,z)$ by
$$  \sum_{n\leq z^{k}} \frac{\alpha_{k}(n)}{n^{s}} = \left(\sum_{p\leq z} \frac{1}{p^{s}}\frac{\log x/p}{\log x}\right)^{k},$$
then it is easily seen that $|\alpha_{k}(n)|\leq k!$.  Thus, Lemma \ref{mvt} implies that the estimate
\begin{eqnarray*}
\sum_{0<\gamma\leq T} |S_{1}(\rho)|^{2k} &\ll& N(T) \ k! \ \Big(\sum_{p\leq z}\frac{1}{p}\frac{\log(x/p)}{\log x}\Big)^{k} 
\\
&\ll&  N(T) \ k! \ \Big(\sum_{p\leq z}\frac{1}{p}\Big)^{k} 
\\
&\ll& N(T) \sqrt{k} \Big(\frac{k \log\log T}{e}\Big)^{k}
\end{eqnarray*}
holds for any positive integer $k$ with $z^{k}\leq T(\log T)^{-1}$ and $T$ sufficiently large.  Using (\ref{val2}), we deduce from this estimate that 
\begin{equation}\label{val3}
N_{1}(T;V) \ll N(T)  \sqrt{k} \Big(\frac{k \log\log T}{eV_{1}^{2}}\Big)^{k}.
\end{equation}
It is now convenient to consider separately the case when $V\leq (\log\log T)^{2}$ and the case $V>(\log\log T)^{2}$.  When $V\leq (\log\log T)^{2}$ we choose $k = \lfloor V_{1}^{2}/\log\log T\rfloor$ where, as before, $\lfloor x \rfloor$ denotes the greatest integer less than or equal to $x$.  To see that this choice of $k$ satisfies $z^{k}\leq T(\log T)^{-1},$ we notice from the definition of $A$ that
$$ VA \leq \max \left( V, \ \tfrac{1}{2}(\log\log T)\log_{3}T\right).$$
Therefore, we find that
\begin{eqnarray*}
z^{k} \leq z^{V_{1}^{2}/\log\log T} &=& \exp\left(\tfrac{VA\log T}{(\log\log T)^{2}}\big(1-\tfrac{9}{10 A}\big)^{2}\right)
\\
&\leq & \exp\left(\log T \big(1-\tfrac{9}{10 A}\big)^{2}\right) 
\\
&\leq& T/\log T.
\end{eqnarray*}  
Thus, by (\ref{val3}), we see that for $ V \leq (\log\log T)^{2}$ and $T$ large we have
\begin{equation}\label{val4}
\begin{split}
N_{1}(T;V) \ll N(T)  \frac{V}{\sqrt{\log\log T}} \exp\Big(-\frac{V_{1}^{2}}{\log\log T}\Big).
\end{split}
\end{equation}
When $V\!>\!(\log\log T)^{2}$ we choose $k\!=\! \lfloor 10V\rfloor$.  This choice of $k$ satisfies $z^{k}\leq T(\log T)^{-1}$ since $ z^{10V} =T^{10/\log\log T} \leq T(\log T)^{-1}$
for large $T$.  With this choice of $k$, we conclude from (\ref{val3}) that
\begin{equation}\label{ff}
\begin{split}
N_{1}(T;V) & \ll N(T)  \exp \Big(\tfrac{1}{2} \log V - 10V \log\Big(\tfrac{eV}{1000\log\log T}\Big)\Big) 
\\
& \ll N(T) \exp \big( -10 V\log V + 11 V \log_{3}(T)\big)
\end{split}
\end{equation}
for $T$ sufficiently large.  Since $V > (\log\log T)^{2}$, we have that $\log V \geq 2 \log_{3}(T)$ and thus it follows from (\ref{ff})  that 
\begin{equation}\label{val5}
\begin{split}
N_{1}(T;V) & \ll N(T) \exp\big(-4V\log V\big).
\end{split}
\end{equation}
By combining (\ref{val4}) and (\ref{val5}), we have shown that, for any choice of $V$,
\begin{equation}\label{val6}
\begin{split}
N_{1}(T;V) & \ll N(T)\frac{V}{\sqrt{\log\log T}} \exp\Big(-\frac{V_{1}^{2}}{\log\log T}\Big)+ N(T) \exp\big(-4V\log V\big).
\end{split}
\end{equation}

We now turn our attention to estimating $N_{2}(T;V).$   If we define the sequence $\beta_{k}(n) = \beta_{k}(n,x,z)$ by
$$  \sum_{n\leq x^{k}} \frac{\beta_{k}(n)}{n^{s}} = \left(\sum_{z<p\leq x} \frac{1}{p^{s}}\frac{\log x/p}{\log x}\right)^{k},$$
then it can be seen that $|\beta_{k}(n)|\leq k!$.  Thus, Lemma \ref{mvt} implies that 
\begin{equation}\label{val7}
\begin{split}
\sum_{0<\gamma\leq T} |S_{2}(\rho)|^{2k} &\ll N(T) \ k! \ \Big(\sum_{z<p\leq x}\frac{1}{p}\frac{\log(x/p)}{\log x}\Big)^{k} 
\\
&\ll  N(T) \ k! \ \Big(\sum_{z<p\leq x}\frac{1}{p}\Big)^{k} 
\\
&\ll N(T) \ k! \ \Big(\log_{3}(T) +O(1)\Big)^{k}
\\
&\ll N(T) \ k! \ \big(2\log_{3}(T) \big)^{k}
\\
&\ll N(T) \big( 2k \log_{3}(T)\big)^{k}
\end{split}
\end{equation}
for any natural number $k$ with $x^{k} \leq T/\log T$ and $T$ sufficiently large.  The choice of $k=\lfloor \frac{V}{A}\!-\!1\rfloor$ satisfies $x^{k} \leq T/\log T$ when $T$ is large.  To see why, recall that $A\geq 1, x=T^{A/V},$ and $V\leq\frac{2}{5}\frac{\log T}{\log\log T}$.  Therefore, 
$$ x^{k} \leq x^{(V/A-1)} \leq T^{1-A/V} \leq T^{1-1/V} = T(\log T)^{-5/2} \leq T(\log T)^{-1}.$$
Also, observing that $A\leq\tfrac{1}{2}\log_{3}(T)$ and recalling that $V\geq\sqrt{\log\log T}$, with this choice of $k$ and $T$ large, it follows from (\ref{val2}) that
\begin{equation}\label{val8}
\begin{split}
N_{2}(T;V) &\ll N(T) \Big(\frac{10A}{V}\Big)^{2k} \big( 2k \log_{3}(T)\big)^{k}
\\
&\ll N(T) \exp\Big( -2k \log(\tfrac{V}{10A}) + k \log(2k\log_{3}(T))\Big)
\\
&\ll N(T) \exp\Big( -2\tfrac{V}{A} \log(\tfrac{V}{10A}) +2\log\tfrac{V}{10A}+\tfrac{V}{A} \log\big(\tfrac{2V}{A}\log_{3}(T)\big)\Big)
\\
&\ll N(T) \exp\big(-\tfrac{V}{2A} \log V\big).
\end{split}
\end{equation}

Using our estimates for $N_{1}(T;V)$ and $N_{2}(T;V)$ we can now complete the proof of the lemma by checking the various ranges of $V$.  By combining (\ref{val6}) and (\ref{val8}), we see that
\begin{equation}\label{val9}
\begin{split}
\#\mathcal{S}_{\alpha}(T;V) &\ll N(T)\frac{V}{\sqrt{\log\log T}} \exp\big(-\tfrac{V_{1}^{2}}{\log\log T}\big) + N(T) \exp\big(-4V\log V\big)
\\
& \quad \quad\quad \quad + N(T) \exp\big(-\tfrac{V}{2A} \log V\big).
\end{split}
\end{equation}
If $\sqrt{\log\log T}\leq V \leq \log\log T$, then $A=\tfrac{1}{2}\log_{3}(T)$ and (\ref{val9}) implies that, for $T$ sufficiently large,
\begin{equation}\label{val10}
\begin{split}
 \#\mathcal{S}_{\alpha}(T;V) &\ll N(T)\frac{V}{\sqrt{\log\log T}} \exp\left(-\tfrac{V^{2}}{\log\log T}\Big(1-\tfrac{9}{5\log_{3}T}\Big)^{2}\right)
 \\
 &\ll N(T)\frac{V}{\sqrt{\log\log T}} \exp\left(-\tfrac{V^{2}}{\log\log T}\Big(1-\tfrac{4}{\log_{3}T}\Big)\right).
 \end{split}
 \end{equation}
If $\log\log T < V\leq \tfrac{1}{2}(\log\log T)\log_{3}(T)$, then $A = \tfrac{\log\log T}{2V}\log_{3}(T)$ and we deduce from (\ref{val9}) that
\begin{equation}\label{val11}
\begin{split}
\#\mathcal{S}_{\alpha}(T;V) &\ll N(T)\frac{V}{\sqrt{\log\log T}} \exp\left(-\tfrac{V^{2}}{\log\log T}\Big(1-\tfrac{9}{5(\log\log T)\log_{3}T}\Big)^{2}\right)
\\
& \quad \quad + N(T) \exp\left(-\tfrac{V^{2} \log V}{(\log\log T)\log_{3}T}\right) + N(T) \exp\big(-4V\log V\big).
 \end{split}
 \end{equation}
 For $V$ in this range, $ \frac{ \log V}{(\log\log T)\log_{3}T} > \frac{1}{\log\log T} \text{ and } \frac{V}{\log V} < \log\log T$, so (\ref{val11}) implies that
 \begin{equation}\label{val12}
\begin{split}
\#\mathcal{S}_{\alpha}(T;V) &\ll N(T)\frac{V}{\sqrt{\log\log T}} \exp\left(-\tfrac{V^{2}}{\log\log T}\Big(1-\tfrac{9}{5(\log\log T)\log_{3}T}\Big)^{2}\right)
\\
& \ll  N(T)\frac{V}{\sqrt{\log\log T}} \exp\left(-\tfrac{V^{2}}{\log\log T}\Big(1-\tfrac{4}{(\log\log T)\log_{3}T}\Big)\right).
 \end{split}
 \end{equation}
Finally, if $V \geq \tfrac{1}{2}(\log\log T)\log_{3}T$, then $A=1$ and we deduce from (\ref{val9}) that
 \begin{equation}\label{val13}
\begin{split}
\#\mathcal{S}_{\alpha}(T;V) &\ll N(T) \exp\left(\log V-\tfrac{V^{2}}{100 \log\log T}\right) + N(T) \exp\left(-\tfrac{V}{2} \log V\right). \end{split}
 \end{equation}
Certainly,  if $V \geq \tfrac{1}{2}(\log\log T)\log_{3}T$ then we have that $ \frac{V^{2}}{100 \log\log T} - \log V > \frac{1}{201} V\log V $ for $T$ sufficiently large and so it follows from (\ref{val13}) that
\begin{equation}\label{val14}
\begin{split}
\#\mathcal{S}_{\alpha}(T;V) &\ll N(T) \exp\left(-\tfrac{V}{201} \log V\right) .
\end{split}
\end{equation}
The lemma now follows from the estimates in (\ref{val10}), (\ref{val12}), and (\ref{val14}).
\end{proof}


\section{The proof of Theorem \ref{th2}}

Using Lemma \ref{vd}, we first prove Theorem \ref{th2} in the case where $|\alpha|\leq 1$ and $0\leq\Re\alpha \leq (\log T)^{-1}$.  Then, from this result, the case when  $-(\log T)^{-1}\leq \Re \alpha <0$ can be deduced from the functional equation for $\zeta(s)$ and Stirling's formula for the gamma function.  In what follows, $k\in\mathbb{R}$ is fixed and we let $\varepsilon>0$ be an arbitrarily small positive constant which may not be the same at each occurrence.  

First, we partition the real axis into the intervals $I_{1} = (-\infty, 3],  I_{2}=(3, 4k \log\log T],$ and $I_{3}= (4k\log\log T, \infty)$ and set
$$ \Sigma_{i} = \sum_{\nu\in I_{i} \cap \mathbb{Z}} e^{2k\nu} \cdot \#\mathcal{S}_{\alpha}(T,\nu)$$
for $i=1,2,$ and $3$.  Then we observe that
\begin{equation} \label{obsv}
\sum_{0<\gamma\leq T} \big|\zeta(\rho\!+\!\alpha)\big|^{2k} \leq \sum_{\nu\in\mathbb{Z}} e^{2k\nu} \Big[  \#\mathcal{S}_{\alpha}(T,\nu) - \#\mathcal{S}_{\alpha}(T,\nu\!-\!1)\Big] \leq \Sigma_{1}+\Sigma_{2}+\Sigma_{3}.
\end{equation}  Using the trivial bound $\#\mathcal{S}_{\alpha}(T,\nu) \leq N(T)$, which holds for every $\nu\in\mathbb{Z}$, we find that $\Sigma_{1} \leq e^{6k} N(T)$.  To estimate $\Sigma_{2}$, we use the bound $$\#\mathcal{S}_{\alpha}(T,\nu) \ll N(T) (\log T)^{\varepsilon} \exp\Big(\frac{-\nu^{2}}{\log\log T}\Big)$$ which follows from the first two cases of Lemma \ref{vd} when $\nu\in I_{2}\cap \mathbb{Z}$.  From this, it follows that
\begin{equation*}
\begin{split}
\Sigma_{2} &\ll N(T) (\log T)^{\varepsilon} \int_{3}^{4k\log\log T} \exp\left(2ku-\tfrac{u^{2}}{\log\log T}\right) du
\\
&\ll N(T) (\log T)^{\varepsilon} \int_{0}^{4k} (\log T)^{u(2k-u)}  \ du
\\
&\ll N(T) (\log T)^{k^{2}+\varepsilon}
\end{split}
\end{equation*}
When $\nu\in I_{3}\cap \mathbb{Z}$, the second two cases of Lemma \ref{vd} imply that $$\#\mathcal{S}_{\alpha}(T,\nu) \ll N(T) (\log T)^{\varepsilon} e^{-4k\nu}.$$ Thus, 
\begin{equation*}
\begin{split}
\Sigma_{3} &\ll N(T) (\log T)^{\varepsilon}\int_{4k\log\log T}^{\infty } e^{-2ku} \  du \ll N(T) (\log T)^{-8 k^{2} + \varepsilon}.
\end{split}
\end{equation*}
In light of (\ref{obsv}), by collecting estimates, we see that
\begin{equation}\label{obsv2}
\sum_{0<\gamma\leq T} \big|\zeta(\rho\!+\!\alpha)\big|^{2k} \ll N(T) (\log T)^{k^{2}+\varepsilon}
\end{equation}
for every $k>0$ when $|\alpha|\leq 1$ and $0\leq \Re\alpha \leq (\log T)^{-1}$.  

The functional equation for the zeta-function states that $ \zeta(s) = \chi(s)\zeta(1-s)$ where $\chi(s) = 2^{s}\pi^{s-1}\Gamma(1\!-\!s)\sin\big(\tfrac{\pi s}{2}\big)$.  Stirling's asymptotic formula for the gamma function (see Appendix A.7 of Ivi\`{c} \cite{Ivic85}) can be used to show that
\begin{equation*}
\big| \chi(\sigma\!+\!it) \big| = \Big(\frac{|t|}{2\pi}\Big)^{1/2-\sigma}\Big(1+O\Big(\frac{1}{|t|}\Big)\Big)
\end{equation*}
uniformly for $-1\leq \sigma\leq 2$ and $|t|\geq 1$. Using the Riemann Hypothesis, we see that
\begin{equation*}
\begin{split}
 \big|\zeta(\rho\!+\!\alpha)\big| &= \big|\chi(\rho\!+\!\alpha) \zeta(1\!-\!\rho\!-\!\alpha) \big| 
 \\
 &=  \big|\chi(\rho\!+\!\alpha) \zeta(\bar{\rho}\!-\!\alpha) \big| 
 \\
 & =  \big|\chi(\rho\!+\!\alpha) \zeta(\rho\!-\!\bar{\alpha}) \big| 
 \\
 &\leq C \big|\zeta(\rho\!-\!\bar{\alpha}) \big|
\end{split}
\end{equation*}
for some absolute constant $C>0$ when $|\alpha|\leq1$, $|\Re\alpha -\tfrac{1}{2}|\leq (\log T)^{-1}$, and $0<\gamma\leq T$.  Consequently, for $-(\log T)^{-1}\leq\Re \alpha<0$, 
\begin{equation}\label{obsv3}
 \sum_{0<\gamma\leq T} \big|\zeta(\rho\!+\!\alpha)\big|^{2k} \leq C^{2k}\cdot \sum_{0<\gamma\leq T} \big|\zeta(\rho\!-\!\bar{\alpha})\big|^{2k}.
 \end{equation}
Applying the inequality in (\ref{obsv2}) to the right-hand side of (\ref{obsv3}) we see that 
\begin{equation}\label{obsv4}
\sum_{0<\gamma\leq T} \big|\zeta(\rho\!+\!\alpha)\big|^{2k} \ll_{k} N(T) (\log T)^{k^{2}+\varepsilon}
\end{equation}
for every $k>0$ when $|\alpha|\leq 1$ and $-(\log T)^{-1} \leq \Re\alpha< 0$.  The theorem now follows from the estimates in (\ref{obsv2}) and (\ref{obsv4}).


\section{Theorem \ref{th2} implies Theorem \ref{th1}}

Theorem \ref{th1} can now be established as a simple consequence of Theorem \ref{th2} and the following lemma.

\begin{lemma}\label{CE}
Assume the Riemann Hypothesis.  Let $k, \ell \in \mathbb{N}$ and let $R>0$ be arbitrary.  Then we have
\begin{equation}\label{CE1}
 \sum_{0<\gamma\leq T} \big|\zeta^{(\ell)}(\rho)\big|^{2k} \leq \Big(\frac{\ell !}{R^{\ell}}\Big)^{2k} \cdot \left[\max_{|\alpha|\leq R} \ \sum_{0<\gamma\leq T} \big|\zeta(\rho\!+\!\alpha)\big|^{2k}\right].
 \end{equation}
\end{lemma}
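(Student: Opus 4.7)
The plan is to use Cauchy's integral formula for derivatives, together with Jensen's inequality (i.e., convexity of $x \mapsto x^{2k}$), in order to bound $|\zeta^{(\ell)}(\rho)|^{2k}$ by a circular average of $|\zeta|^{2k}$ around $\rho$. Since $\zeta(s)$ is entire apart from a simple pole at $s=1$, and since the zeros $\rho$ in question have $|\gamma| > 0$, the function $\zeta$ is holomorphic in any disk $|z-\rho|\leq R$ provided $T$ is large enough relative to $R$. So Cauchy's formula gives
\begin{equation*}
\zeta^{(\ell)}(\rho) \;=\; \frac{\ell!}{2\pi i}\oint_{|z-\rho|=R}\frac{\zeta(z)}{(z-\rho)^{\ell+1}}\,dz \;=\; \frac{\ell!}{2\pi R^{\ell}}\int_{0}^{2\pi}\zeta\bigl(\rho+Re^{i\theta}\bigr)e^{-i\ell\theta}\,d\theta.
\end{equation*}

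Next I would take absolute values and raise both sides to the $2k$ power. The triangle inequality together with Jensen's inequality applied to the convex function $u\mapsto u^{2k}$ (valid since $k\in\mathbb{N}$, so $2k\geq 2$) yields
\begin{equation*}
\bigl|\zeta^{(\ell)}(\rho)\bigr|^{2k} \;\leq\; \Bigl(\frac{\ell!}{R^{\ell}}\Bigr)^{2k}\!\left(\frac{1}{2\pi}\int_{0}^{2\pi}\bigl|\zeta(\rho+Re^{i\theta})\bigr|\,d\theta\right)^{\!2k} \;\leq\; \Bigl(\frac{\ell!}{R^{\ell}}\Bigr)^{2k}\cdot\frac{1}{2\pi}\int_{0}^{2\pi}\bigl|\zeta(\rho+Re^{i\theta})\bigr|^{2k}\,d\theta.
\end{equation*}

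Finally, I would sum over the zeros $\rho$ with $0<\gamma\leq T$ and interchange the (finite) sum with the integral over $\theta$. For each fixed $\theta$, the shift $\alpha = Re^{i\theta}$ satisfies $|\alpha|=R$, so the inner sum is bounded by the maximum of $\sum_{0<\gamma\leq T}|\zeta(\rho+\alpha)|^{2k}$ taken over $|\alpha|\leq R$. Pulling this maximum outside the $\theta$-integral and evaluating $\tfrac{1}{2\pi}\int_{0}^{2\pi}d\theta = 1$ gives the desired bound (\ref{CE1}).

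There is no real obstacle here: the only subtlety is to observe that $\zeta$ is genuinely holomorphic on the closed disk $|z-\rho|\leq R$ for every zero under consideration, which is immediate since $R$ is fixed while the pole at $s=1$ is far from the zeros with $0<\gamma\leq T$ (one may simply assume $T$ large enough that $\gamma\geq R+1$ for all such $\rho$, absorbing any finite number of small $\gamma$'s into the implied constant when this lemma is combined with Theorem \ref{th2}). Deducing Theorem \ref{th1} from (\ref{CE1}) is then a matter of choosing, e.g., $R=1/\log T$ (so that $\ell!/R^{\ell}$ contributes only a power of $\log T$) and applying Theorem \ref{th2} with $\ell=1$ to the right-hand side.
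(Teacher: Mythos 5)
Your proof is correct, and it takes a genuinely different and more streamlined route than the paper. The paper's argument first writes $|\zeta^{(\ell)}(\rho)|^{2k}=\prod_{i=1}^{k}\zeta^{(\ell)}(\rho)\zeta^{(\ell)}(1-\rho)$ using RH (so that $1-\rho=\bar\rho$) together with $\zeta^{(\ell)}(\bar s)=\overline{\zeta^{(\ell)}(s)}$, then applies Cauchy's formula in $2k$ separate variables to obtain a $2k$-fold contour integral of the multi-shift product $\mathcal Z(\rho;\vec\alpha_k)=\prod_{i}\zeta(\rho+\alpha_i)\zeta(1-\rho+\alpha_{i+k})$, and finally invokes H\"older's inequality on the sum over zeros. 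You instead apply Cauchy's formula once for the single function $\zeta^{(\ell)}$, then use the convexity of $u\mapsto u^{2k}$ (Jensen's inequality applied to the normalized measure $\tfrac{d\theta}{2\pi}$ on the circle $|z-\rho|=R$) to pass the power inside the circular mean, and bound the resulting sum pointwise by the maximum. Both routes deliver exactly the same constant $(\ell!/R^{\ell})^{2k}$. What your route buys: it is shorter, it requires no reflection through $\tfrac12$ and hence no use of the Riemann Hypothesis at all, and since Jensen only needs $2k\ge 1$ it proves the lemma for every real $k\ge\tfrac12$, not merely $k\in\mathbb N$; the paper's H\"older/multi-contour machinery genuinely uses integrality of $k$. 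One small inaccuracy in your closing remark: increasing $T$ does not make the smallest $\gamma\approx 14.13$ grow, so ``assume $T$ large enough that $\gamma\ge R+1$ for all such $\rho$'' is not achievable when $R$ is large. The cleaner observation is that if some circle $|z-\rho|=R$ encloses the pole at $s=1$, then $\max_{|\alpha|\le R}\sum_{\gamma}|\zeta(\rho+\alpha)|^{2k}=\infty$ and the inequality holds vacuously; otherwise Cauchy's formula applies as you wrote it. (The paper's proof is subject to the same caveat and handles it the same implicit way.)
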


\begin{proof}
Since the function $\zeta^{(\ell)}(s)$ is real when $s\in\mathbb{R}$, $\zeta^{(\ell)}(\bar{s})=\overline{\zeta^{(\ell)}(s)}$.  Hence, assuming the Riemann Hypothesis, the identity 
\begin{equation}\label{CE3}
\big|\zeta^{(\ell)}(1\!-\!\rho\!+\!\alpha)\big|=\big|\zeta^{(\ell)}(\bar{\rho}\!+\!\alpha)\big|=  \big|\zeta^{(\ell)}(\rho\!+\!\overline{\alpha})\big|
\end{equation}
holds for any non-trivial zero $\rho$ of $\zeta(s)$ and any $\alpha\in\mathbb{C}.$  For each positive integer $k$, let $\vec{\alpha}_{k} = (\alpha_{1},\alpha_{2},\ldots,\alpha_{2k})$ and define
$$ \mathcal{Z}\big(s;\vec{\alpha}_{k}\big) = \prod_{i=1}^{k} \zeta(s\!+\!\alpha_{i})\zeta(1\!-\!s\!+\!\alpha_{i+k}).$$
If we suppose that each $|\alpha_{i}|\leq R$ for $i=1,\ldots, 2k$ and apply H\"{o}lder's inequality in the form
\begin{equation*}
\left|\sum_{n=1}^{N} \Bigg(\prod_{i=1}^{2k} f_{i}(s_{n})\Bigg)\right| \leq \prod_{i=1}^{2k} \Big(\sum_{n=1}^{N} |f_{i}(s_{n})|^{2k}\Big)^{\frac{1}{2k}},
\end{equation*}
we see that (\ref{CE3}) implies that
\begin{equation}\label{CE4}
\begin{split}
\left|\sum_{0<\gamma\leq T}  \mathcal{Z}\big(\rho;\vec{\alpha}_{k}\big)\right| & \leq \prod_{i=1}^{k} \left(\sum_{0<\gamma\leq T} \big|\zeta(\rho\!+\!\alpha_{i})\big|^{2k}\right)^{\frac{1}{2k}}\left(\sum_{0<\gamma\leq T} \big|\zeta(\rho\!+\!\overline{\alpha_{k+i}})\big|^{2k}\right)^{\frac{1}{2k}}
\\
& \leq \max_{|\alpha|\leq R}  \sum_{0<\gamma\leq T} \big|\zeta(\rho\!+\!\alpha)\big|^{2k}
\end{split}
\end{equation}
In order to prove the lemma, we first rewrite the left-hand side of equation (\ref{CE1}) using the function $ \mathcal{Z}\big(s;\vec{\alpha}_{k}\big)$ and then apply the inequality in (\ref{CE4}).  By Cauchy's integral formula and another application of (\ref{CE3}), we see that
\begin{equation}\label{CE2}
\begin{split}
 \sum_{0<\gamma\leq T} \big|\zeta^{(\ell)}(\rho)\big|^{2k} &= \sum_{0<\gamma\leq T}\bigg(\prod_{i=1}^{k} \zeta^{(\ell)}(\rho)\zeta^{(\ell)}(1\!-\!\rho)\bigg)
\\
&= \frac{(\ell !)^{2k}}{(2\pi i)^{2k}}\mathop{\int_{\mathscr{C}_{1}}\!\!\!\cdots\!\int_{\mathscr{C}_{2k}}}\!\!\left(\sum_{0<\gamma\leq T} \mathcal{Z}\big(\rho;\vec{\alpha}_{k}\big) \right)\prod_{i=1}^{2k} \frac{d\alpha_{i}}{\alpha_{i}^{\ell + 1}}
\end{split}
\end{equation}
where, for each $i=1,\ldots,2k$, the contour $\mathscr{C}_{i}$ denotes the positively oriented circle in the complex plane centered at $0$ with radius $R$.  Now, combining (\ref{CE4}) and (\ref{CE2}) we find that
\begin{equation*}
\begin{split}
 \sum_{0<\gamma\leq T} \big|\zeta^{(\ell)}(\rho)\big|^{2k} &\leq \Big(\frac{\ell !}{2\pi}\Big)^{2k}\cdot \left[\max_{|\alpha|\leq R} \ \sum_{0<\gamma\leq T} \big|\zeta(\rho\!+\!\alpha)\big|^{2k}\right] \cdot \mathop{\int_{\mathscr{C}_{1}}\!\!\!\cdots\!\int_{\mathscr{C}_{2k}}} \prod_{i=1}^{2k} \frac{d\alpha_{i}}{|\alpha_{i}|^{\ell + 1}}  
 \\
 &\leq \Big(\frac{\ell !}{2\pi}\Big)^{2k}\cdot  \left[\max_{|\alpha|\leq R} \ \sum_{0<\gamma\leq T} \big|\zeta(\rho\!+\!\alpha)\big|^{2k}\right] \cdot\Big(\frac{2\pi}{R^{\ell}}\Big)^{2k}
 \\
 &\leq  \Big(\frac{\ell !}{R^{\ell}}\Big)^{2k} \cdot \left[\max_{|\alpha|\leq R} \ \sum_{0<\gamma\leq T} \big|\zeta(\rho\!+\!\alpha)\big|^{2k}\right],
\end{split}
\end{equation*}
as claimed.
\end{proof}

\noindent\textit{Proof of Theorem \ref{th1}.}  Let $k\in\mathbb{N}$ and set $R= (\log T)^{-1}$.  Then, it follows from Theorem \ref{th2} and Lemma \ref{CE} that
\begin{equation}
\frac{1}{N(T)}\sum_{0<\gamma\leq T} \big|\zeta^{(\ell)}(\rho)\big|^{2k} \ll_{k,\ell,\varepsilon} (\log T)^{k(k+2\ell)+ \varepsilon}
\end{equation}
for any $\ell\in\mathbb{N}$ and for $\varepsilon>0$ arbitrary.  Theorem \ref{th1} now follows by setting $\ell=1$.
\end{onehalfspacing}


\end{document}